\newtheorem{theorem}{Theorem}[section]
\newtheorem{lemma}[theorem]{Lemma}
\newtheorem{claim}[theorem]{Claim}
\newtheorem{proposition}[theorem]{Proposition}
\theoremstyle{definition}
\newtheorem{definition}[theorem]{Definition}
\newtheorem{construction}[theorem]{Construction}
\theoremstyle{remark}
\newtheorem{remark}[theorem]{Remark}
\numberwithin{equation}{section}
\newcommand{\IC}{\mathbb C}
\newcommand{\IF}{\mathbb F}
\newcommand{\IP}{\mathbb P}
\newcommand{\IQ}{\mathbb Q}
\newcommand{\IR}{\mathbb R}
\newcommand{\IZ}{\mathbb Z}
\newcommand{\cI}{\mathcal{I}}
\newcommand{\cO}{\mathcal{O}}
\newcommand{\HH}{\mathrm{H}}
\newcommand{\BL}{\mathrm{Bl}}
\newcommand{\crep}{\mathrm{a}}
\newcommand{\mult}{\mathrm{mult}}
\newcommand{\sing}{\mathrm{sing}}
\newcommand{\smooth}{\mathrm{sm}}
\newcommand{\klt}{\textit{klt }}
\DeclareMathOperator{\Spec}{Spec}
\DeclareMathOperator{\Mov}{Mov}
\DeclareMathOperator{\NE}{NE}
\DeclareMathOperator{\Exc}{Exc}
\DeclareMathOperator{\Bs}{Bs}
\DeclareMathOperator{\Proj}{Proj}
\DeclareMathOperator{\CL}{Cl}
\DeclareMathOperator{\Aut}{Aut}
\DeclareMathOperator{\NN}{N}
\DeclareMathOperator{\Div}{div}
\DeclareMathOperator{\Bir}{Bir}
\newcommand{\resp}{\textit{resp. }}
\title{Divisorial contractions to codimension three orbits}
\author{Samuel Boissi\`ere}
\address{Laboratoire de Math\'ematiques et Applications,
UMR 7348 du CNRS,
B\^atiment H3,
Boulevard Marie et Pierre Curie,
Site du Futuroscope,
TSA 61125,
86073 Poitiers Cedex 9,
France}
\email{samuel.boissiere@math.univ-poitiers.fr}
\author{Enrica Floris}
\address{Laboratoire de Math\'ematiques et Applications,
UMR 7348 du CNRS,
B\^atiment H3,
Boulevard Marie et Pierre Curie,
Site du Futuroscope,
TSA 61125,
86073 Poitiers Cedex 9,
France}
\email{enrica.floris@math.univ-poitiers.fr}
\begin{document}


\removeabove{0cm}
\removebetween{0cm}
\removebelow{0cm}

\maketitle

\begin{prelims}

\DisplayAbstractInEnglish

\bigskip

\DisplayKeyWords

\medskip

\DisplayMSCclass

\bigskip

\languagesection{Fran\c{c}ais}

\bigskip

\DisplayTitleInFrench

\medskip

\DisplayAbstractInFrench

\end{prelims}


\newpage

\setcounter{tocdepth}{1}

\tableofcontents


\section{Introduction}

Let $Y$ be a smooth complex projective variety. The determinant $K_Y$ of its tangent vector bundle  is a line bundle canonically attached to~$Y$ and called the {\it canonical divisor}.
The so-called {\it numerical properties} of $K_Y$ determine the geometry of~$Y$. For example, it is well-known that if $Y$ is a smooth surface, then either
$Y$~is covered by rational curves which have negative intersection with the canonical divisor or, by the Castelnuovo theorem, the curves having negative intersection with the canonical divisor are rational and can be contracted, giving a birational morphism:
\[
Y\longrightarrow X,
\]
such that the canonical divisor of $X$ has non-negative intersection with every curve.

The minimal model program (MMP) aims to achieve a similar description for higher-dimensional varieties.
In the development of the theory, it has become necessary to take singular varieties into consideration.
If $Y$ is normal, its canonical divisor can be defined in the following way. Denote by $Y^\smooth$ the non-singular locus of~$Y$. Writing $K_{Y^\smooth}=\sum_i a_i D_i$, where $D_i$ are prime divisors on $Y^\smooth$ and $a_i\in\IZ$, one defines $K_Y\coloneqq\sum_i a_i \overline D_i$, where $\overline D_i$ is the Zariski closure of $D_i$ in $Y$. Since the singular locus $Y^\sing$ of $Y$ has codimension at least two in~$Y$, the divisor~$K_Y$ is the unique divisor extending $K_{Y^\smooth}$. In order to be able to compute intersection numbers, we need the divisor $K_Y$, which is a priori only a Weil divisor, to be a $\IQ$-Cartier divisor. Moreover, we always assume that $Y$ has at worst {\it terminal} singularities (see~\S\ref{ss:MMP}).

The first step of the MMP consists in looking at the curves which have negative intersection with~$K_Y$.
This is achieved by the \emph{cone and contraction theorem}~\cite[Theorem~3.7]{KollarMori}, which describes the cone of numerical equivalence classes of curves in~$Y$.
It states that if $R$~is an extremal ray of the cone, having negative intersection with the canonical divisor, then there is a morphism $f\colon Y\to X$, called an {\it extremal contraction}, contracting exactly those curves whose class belongs to $R$.
There are three possibilities for the morphism $f$ (see~\cite[Proposition~2.5]{KollarMori}):
\begin{itemize}
\item {\it Divisorial contraction}: $\dim Y=\dim X$ and the exceptional locus of $f$ has codimension 1;
\item {\it Small contraction}: $\dim Y=\dim X$ and the exceptional locus of $f$ has codimension $\geq 2$;
\item {\it Mori fibre space}: $\dim Y>\dim X$.
\end{itemize}
Those morphisms are the elementary bricks of the minimal model program and of the Sarkisov program.
The first conjecturally associates to a variety a simpler model (either a minimal model or a Mori fibre space, see~\cite[\S 2.1]{KollarMori})
and the second describes the relation between two different Mori fibre spaces associated to the same variety (see~\cite{Cor95,HMcK}).

In this note, we focus on divisorial contractions. It is indeed useful for many applications to know what a divisorial contraction from a certain variety~$Y$ looks like, as it gives information on the possible outcomes after performing an MMP on~$Y$.
Divisorial contractions from a smooth variety~$Y$ have been studied for instance in~\cite{Ando,Wisn} and extremal contractions from mildly singular varieties in~\cite{And95,And18,AndTas14,AndTas16}.
Here we focus our attention on the singularities of $X$, rather than on the singularities of~$Y$.
If $X$~is a smooth surface and if the centre $Z\coloneqq f(\Exc(f))$ of~$f$ is a point, then by the Castelnuovo theorem, $f$~is a smooth blow-up.
In dimension three, Kawakita proves the following result:

\begin{theorem}[Kawakita, {\cite[Theorem~1.1]{Kawakita}}] \label{th:kawakita}
Let $f\colon Y\to X$ be $3$-dimensional divisorial contraction, which contracts its exceptional divisor to a smooth point. Then $f$~is a weighted blow-up.
\end{theorem}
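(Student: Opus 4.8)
The statement is local on $X$ near the point $P\coloneqq f(\Exc(f))$, so I would replace $X$ by the spectrum of the completed local ring $\cO_{X,P}\cong\IC[[x_1,x_2,x_3]]$ and write $E\coloneqq\Exc(f)$ for the irreducible exceptional divisor. Then $E$ determines a divisorial valuation $v\coloneqq v_E$ on the fraction field of $\cO_{X,P}$ with centre $P$, whose value group I normalise to be $\IZ$, together with a discrepancy $a$ defined by $K_Y=f^*K_X+aE$; since $X$ is smooth and $Y$ is terminal one has $a>0$. Because $f$ contracts an extremal ray, $-E$ is $f$-ample, and therefore
\[
Y\cong\Proj\bigoplus_{n\geq 0}f_*\cO_Y(-nE)=\Proj\bigoplus_{n\geq 0}I_n,\qquad I_n\coloneqq\{h\in\cO_{X,P}\mid v(h)\geq n\}.
\]

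The plan is to reduce the whole statement to a single assertion: that $v$ is a \emph{monomial} valuation in suitable analytic coordinates. Indeed, set $a_i\coloneqq v(x_i)$ for a sufficiently general coordinate system $(x_1,x_2,x_3)$, ordered $a_1\leq a_2\leq a_3$ and with $\gcd(a_1,a_2,a_3)=1$, and let $v_0$ be the monomial valuation determined by $v_0(x_i)=a_i$. The toric description of the weighted blow-up $W\to X$ with weights $(a_1,a_2,a_3)$ identifies $W=\Proj\bigoplus_n\mathfrak a_n$, where $\mathfrak a_n=(x^\alpha\mid\textstyle\sum_i a_i\alpha_i\geq n)$ are the monomial valuation ideals of $v_0$. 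Thus if I can show $v=v_0$, then $I_n=\mathfrak a_n$ for every $n$ and the displayed isomorphism gives $Y\cong W$, that is, $f$ is the weighted blow-up of weights $(a_1,a_2,a_3)$, with incidentally $a=a_1+a_2+a_3-1$. The inequality $v\geq v_0$ is immediate from the choice of the $a_i$; the whole content lies in the reverse inequality.

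To prove $v=v_0$ I would compute the Hilbert function $n\mapsto\dim_\IC\cO_{X,P}/I_n$ in two ways. On one hand, from the exact sequence $0\to\cO_Y(-nE)\to\cO_Y\to\cO_{nE}\to 0$ and the vanishing of the higher direct images (rational singularities for $R^if_*\cO_Y$, and relative Kawamata--Viehweg vanishing for $R^if_*\cO_Y(-nE)$, applicable since $-nE-K_Y\equiv_f-(n+a)E$ is $f$-ample), one obtains $\dim_\IC\cO_{X,P}/I_n=\chi(\cO_{nE})=\chi(\cO_Y)-\chi(\cO_Y(-nE))$, which I would expand by Reid's singular Riemann--Roch theorem on the terminal threefold $Y$; the expansion involves the self-intersection data of $E$ together with the contributions of the basket of terminal quotient singularities of $Y$ lying along $E$. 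On the other hand, the monomial model $v_0$ furnishes an explicit Ehrhart-type quasi-polynomial with leading term $n^3/(6a_1a_2a_3)$. Matching the two expansions, using the precise shape of the singularity contributions forced by the Reid--Tai terminality criterion rather than merely their leading terms, pins down the ideals $I_n$ level by level and yields $v\leq v_0$.

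The main obstacle is exactly this last step. The asymptotic comparison of leading coefficients is elementary and only determines the product $a_1a_2a_3$; the difficulty is to obtain \emph{exact}, finite-level control of the ideals $I_n$. This is where the full classification of three-dimensional terminal singularities (Mori, Reid) becomes indispensable: one must determine precisely which singularities can occur along $E$ and how the local structure of $Y$ at each of them constrains $v$, thereby excluding every non-monomial possibility. This delicate singularity analysis, rather than the formal reduction carried out above, is the genuine heart of the argument.
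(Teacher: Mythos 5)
First, a contextual point: the paper does not prove this statement. Theorem~\ref{th:kawakita} is imported verbatim from Kawakita~\cite{Kawakita} and is used as the base case ($d=3$) of the induction proving Theorem~\ref{th:main}, so there is no internal proof to compare against; the relevant comparison is with Kawakita's original argument, whose first half (the tower construction and the valuation-theoretic characterisation of weighted blow-ups) the present paper does reproduce in \S\ref{s:tower} and Proposition~\ref{charactwbu}. Your reduction --- two divisorial contractions over $X$ inducing the same valuation are isomorphic, the weighted blow-up is $\Proj$ of the monomial valuation ideals, hence it suffices to prove $v=v_0$ --- is exactly that first half and is sound; it corresponds to Lemma~3.4 and Proposition~3.6 of \cite{Kawakita}, the latter restated here as Proposition~\ref{charactwbu}.

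The gap is that the second half, which is the entire content of the theorem, is only announced. You assert that matching the singular Riemann--Roch expansion of $\chi(\cO_Y(-nE))$ against the Ehrhart count for $v_0$ ``pins down the ideals $I_n$ level by level'', and then concede that this step is ``the genuine heart of the argument'' and that you do not carry it out. That concession is accurate: the asymptotic comparison only recovers the product $a_1a_2a_3$, and extracting exact, finite-level information about $I_n$ from Reid's formula requires controlling the basket of terminal singularities of $Y$ along $E$; this occupies the bulk of \cite{Kawakita} and is where all the work lies, so deferring it leaves the theorem unproved. There is also a concrete slip in the setup: the weights cannot be read off from a \emph{sufficiently general} coordinate system, since for generic coordinates all three values $v(x_i)$ equal $\min\{v(h):h\in\km\}$ and your $v_0$ would then be (a multiple of) the ordinary blow-up valuation. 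The coordinates realising the weights $(1,m,n)$ must be chosen adaptedly, successively maximising $v(x_i)$ --- which is precisely what the tower construction of \S\ref{s:tower}, following \cite[Construction~3.1]{Kawakita}, and the coordinate choices in the proof of Proposition~\ref{charactwbu} are designed to produce. As it stands, the proposal is a correct plan whose decisive step is missing.
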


This result is particularly interesting if $X$ has a Mori fibre space structure $X\to B$  and if one wants to study in detail the Sarkisov program starting from $X/B$. Indeed, for that, it is necessary to know the contractions \textit{from} and \textit{to} $X$.

\medskip

If $G$ is a connected algebraic group acting on $X$, then the Sarkisov program can be used to determine whether $G$ is a maximal subgroup of the group $\Bir(X)$ of birational automorphisms of~$X$ (see~\cite{BFT19,Flo18}).
Motivated by the study of the $G$-equivariant Sarkisov program, we turn our attention to $G$-equivariant divisorial contractions
to a codimension three centre contained in the smooth locus and we prove the following result for any $d\geq 3$.

\begin{theorem}\label{th:main}
Every  $d$-dimensional $G$-equivariant divisorial contraction to a  $G$-simply connected $G$-orbit of dimension~$d-3$,
 contained in the smooth locus of $X$ is a $G$-equivariant weighted blow-up.
\end{theorem}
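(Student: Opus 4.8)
The plan is to reduce the $d$-dimensional equivariant statement to Kawakita's three-dimensional Theorem~\ref{th:kawakita} by slicing transversally to the orbit, and then to use the homogeneity of the centre $Z$ under $G$ to spread the resulting weighted-blow-up structure along the whole of $Z$. First I would record that the exceptional divisor $E:=\Exc(f)$ is irreducible (since $f$ is an extremal, i.e.\ relative Picard number one, divisorial contraction) and $G$-invariant (as $G$ is connected and permutes the components of $E$). Hence the divisorial valuation $v:=\mathrm{ord}_E$ on $\IC(X)=\IC(Y)$ is $G$-invariant, with centre $Z$ on $X$. Writing $Z=G/H$ and fixing $z_0\in Z$ with stabiliser $H:=G_{z_0}$, the hypothesis that $Z$ is $G$-simply connected is used precisely to guarantee that $H$ is connected. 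Because $Z$ lies in the smooth locus of $X$ and is a single orbit, the normal space $V:=T_{z_0}X/T_{z_0}Z$ is a three-dimensional $H$-representation, the normal bundle is the associated bundle $N_{Z/X}=G\times_H V$, and an equivariant (formal or \'etale-local) tubular neighbourhood identifies a neighbourhood of $Z$ in $X$ with a neighbourhood of the zero section of $N_{Z/X}$, compatibly with the $G$-action.

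Next I would pass to the fibre over $z_0$. Choosing an $H$-stable transverse slice $S$ through $z_0$ with germ $(S,z_0)\cong(V,0)$, the restriction $g:=f|_{f^{-1}(S)}\colon f^{-1}(S)\to S$ is a proper birational morphism of a three-fold onto the smooth germ $(V,0)$, contracting the divisor $E\cap f^{-1}(S)$ to the point $z_0$. The core of this step is to check that $g$ is again a divisorial contraction in the sense required by Theorem~\ref{th:kawakita}: the bundle structure of $f$ over the homogeneous base $Z$ should show that $f^{-1}(S)$ is terminal (smooth morphisms both preserve and reflect terminality), that $-K_{f^{-1}(S)}$ is $g$-ample (restriction of the $f$-ample $-K_Y$), and that $\rho(f^{-1}(S)/S)=1$ (the $Z$-directions contribute no relative class, so the single contracted ray of $f$ restricts to a single contracted ray of $g$). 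This descent, together with the $H$-equivariant construction of the slice, is the step I expect to be the main obstacle, since extremality is a condition global over $X$ and must be localised along the orbit. Granting it, Kawakita's theorem yields weights $w=(a_1,a_2,a_3)$ and coordinates on $(V,0)$ for which $g$ is the $w$-weighted blow-up of $V$ at $0$; equivalently, the restriction $v_0$ of $v$ to $(V,0)$ is the monomial valuation $\mathrm{wt}_w$.

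Finally I would propagate this structure back to $X$. Since $v$ is $G$-invariant, $v_0$ is $H$-invariant, so the $w$-grading on $\cO_{V,0}=\widehat{\mathrm{Sym}}(V^*)$ is by $H$-subrepresentations; as $H$ is connected it cannot permute the distinct weight spaces, whence $V=\bigoplus_j V_j$ decomposes $H$-equivariantly according to the values of $w$ and the weighting is $H$-invariant. This is exactly the datum of a $G$-equivariant weighting of $N_{Z/X}=\bigoplus_j(G\times_H V_j)$, and it defines a $G$-equivariant weighted blow-up $\pi\colon X'\to X$ of $X$ along $Z$. By construction $\pi$ extracts the valuation $v$ and restricts over $z_0$ to $g$. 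Since $X'$ is $\IQ$-factorial and terminal with $-K_{X'}$ $\pi$-ample and $\rho(X'/X)=1$, both $f$ and $\pi$ are extremal divisorial extractions of the same valuation $v$, hence isomorphic over $X$ — either by the uniqueness of such extractions, or directly because they agree over $X\setminus Z$ and, by $G$-equivariance, on each fibre of the normal-bundle model along $Z$. Therefore $f$ is a $G$-equivariant weighted blow-up, as claimed.
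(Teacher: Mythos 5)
Your overall strategy -- cut down to a three--dimensional slice transverse to the orbit, apply Kawakita's Theorem~\ref{th:kawakita} there, and spread the answer along $Z$ by $G$-equivariance together with the uniqueness of the divisorial extraction of a given valuation -- is exactly the strategy of the paper. But the step you yourself flag as ``the main obstacle'' is where essentially all of the work lies, and your proposal does not close it. First, for a non-reductive stabiliser $H=G_{z_0}$ an $H$-stable (even formal) transverse slice need not exist, and in any case an $H$-stable slice cannot be taken \emph{general}; the paper instead slices by a general global complete intersection $H_1\cap\cdots\cap H_{d-3}$ of ample hypersurfaces, because generality and ampleness are what feed the Seidenberg/Bertini theorems (normality and terminality of the slice upstairs and downstairs) and the Grothendieck--Lefschetz theorem of Ravindra--Srinivas (local $\IQ$-factoriality of $H$, and, via a separate argument, of $f^{-1}(H)$). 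There is no bundle structure of $Y$ over $Z$ to invoke: only $E\to Z$ is equivariant over the orbit, and $E$ may be singular and non-normal. Second, even granting all of this, the restricted morphism is \emph{not} automatically an extremal contraction; the paper must run a $K$-MMP with scaling over the slice (and prove its termination) to show that the restriction decomposes as divisorial contractions with disjoint exceptional loci, each of which is then a Kawakita contraction. Your assertion that $\rho(f^{-1}(S)/S)=1$ because ``the $Z$-directions contribute no relative class'' is precisely the unproven point. Third, your use of the $G$-simple connectedness hypothesis (connectedness of $H$) does not deliver what is actually needed, namely the irreducibility of the exceptional divisor $f^{-1}(z_0)$ of the sliced contraction: a connected group can perfectly well act on a reducible fibre fixing each component. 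The paper uses the hypothesis differently, via the Stein factorisation of the normalisation of $E$ over $Z$, whose finite part is $G$-equivariant over the orbit, hence \'etale, hence an isomorphism.

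A smaller but real issue is the propagation step. A weighted blow-up of $X$ along $Z$ is not determined by an $H$-invariant grading of the normal space $V$ alone: one must produce the ideal sheaves $\cI_k$ and verify that they are \emph{locally monomial} in suitable coordinates at every point of $Z$, with the same weights. The paper does this through the tower construction of \S\ref{s:tower} and the valuation-theoretic characterisation of Proposition~\ref{charactwbu} (the conditions $f_*\cO_Y(-2E)\neq\cI_Z$ and $f_*\cO_Y(-nE)\not\subseteq\cI_Z^2$), which are checked on the hyperplane section using Kawakita's theorem and then transported by the $G$-action. Your closing appeal to the uniqueness of the extraction of a valuation is correct and is also how the paper concludes, but it can only be invoked once a genuine $G$-equivariant weighted blow-up extracting $v$ has been exhibited, which brings you back to the gaps above.
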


The strategy of proof of Theorem~\ref{th:main} is based on a construction of Kawakita~\cite{Kawakita} and uses an induction
argument to reduce the statement to Kawakita's theorem~\ref{th:kawakita}. Given a $G$-equivariant divisorial contraction $f\colon Y\to X$ with exceptional divisor $E$, satisfying our assumptions (see~\S\ref{ss:Gdiv}), the starting point in \S\ref{s:tower} is the iterative construction of a second birational model of~$X$, which ends up to a $G$-equivariant birational morphism $h\colon X_n\to X$ contracting an irreducible exceptional divisor~$E_n$. The $G$-equivariant iterative process provides the needed information on the valuation defined by~$E_n$, which corresponds to a weighted blow-up, and by an induction argument in \S\ref{s:proof}, using iterated hyperplane sections, we show that $E$~defines the same valuation on~$X$ as~$E_n$.

Finally, we present in \S\ref{s:counterexample} an example showing that the hypothesis that $Z$ is an orbit is essential. We describe a terminal extraction with one-dimensional center which is not a weighted blow-up.
\medskip

\textbf{Aknowledgements.} The authors warmly thank Andreas H\"oring, J\'anos Koll\'ar, Massimiliano Mella, Andrea Petracci, Erik Paemurru and Ronan Terpereau for helpful comments, enlightening discussions and precious help during the preparation of this work. We also thank the anonymous referee for helpful comments which improved the presentation of this paper.

\section{The setup}

\subsection{Convention} We work over the field $\IC$ of complex numbers.
A \emph{variety} is an integral, separated scheme of finite type over $\IC$. A \emph{divisor} is either Cartier, $\IQ$-Cartier or $\IR$-Cartier, depending on the context.
We denote by $\sim$ (\resp $\sim_{\IQ}$, $\sim_{\IR}$) the linear (\resp $\IQ$-linear, $\IR$-linear) equivalence relation of divisors. Unless explicitely mentioned, all varieties are assumed to be projective.

If $f\colon X\to Y$ is a morphism between two varieties $X$~and~$Y$, and $D_1, D_2$ are divisors on $X$, we write
$D_1\sim_f D_2$
 (\resp $\sim_{\IQ,f}$, $\sim_{\IR,f}$) if there is a Cartier (\resp $\IQ$-Cartier, $\IR$-Cartier) divisor $\delta$ on $Y$ such that $D_1\sim D_2+f^*\delta$ (\resp $\sim_{\IQ f}$, $\sim_{\IR f}$).
We say that a divisor $D$ is $f$-ample (\resp $f$-antiample, $f$-effective) if there exists an ample (\resp antiample, effective) divisor $D'$ such that $D\sim_{f} D'$. A \emph{pair} $(X,\Delta)$ is the data of a normal projective variety $X$ and a $\IQ$-divisor $\Delta$.

\subsection{Divisorial contractions}

\begin{definition}\label{def:divisorial}
A morphism $f\colon Y\to X$ with connected fibers, between normal projective varieties $Y$ and $X$ is called a \emph{divisorial contraction} if it satisfies all the following conditions:
\begin{enumerate}
\item $Y$ is locally $\IQ$-factorial with terminal singularities;
\item the morphism $f$ is birational and its exceptional locus $E$ is a prime divisor;
\item the canonical divisor $K_{Y}$ is $f$-antiample;
\item the morphism $f$ has relative Picard number one.
\end{enumerate}
\end{definition}

Throughout this paper, we consider a $d$-dimensional divisorial contraction:
\[
f\colon Y\to X,
\]
with $d\geq 3$,
which contracts its exceptional divisor~$E$ to its \emph{centre}~$Z$, that is assumed to be a smooth subvariety contained in the smooth locus of~$X$: for short we call $Z$ a \emph{smooth centre}.
Recall that $X$ is also locally $\IQ$-factorial with terminal singularities~\cite[Proposition~3.36 \& Corollary~3.43(3)]{KollarMori} and therefore the singular locus $X^\sing$ of $X$ has codimension at least three~\cite[Lemma~1.3.1]{BS}.
We have a $\IQ$-linear equivalence of $\IQ$-Cartier divisors:
\begin{equation*}
\label{eq1}
K_{Y}\sim_{\IQ} f^\ast  K_X+aE,
\end{equation*}
where the positive rational number $a\coloneqq\crep(E,X)$ is the discrepancy of~$E$ with respect to~$X$.

\subsection{Equivariant divisorial contractions}\label{ss:Gdiv}

\begin{definition}\label{def:Gdivisorial}
Let $G$ be a connected algebraic group. A divisorial contraction $f\colon Y\to X$ is called \emph{$G$-equivariant} if it satisfies the following conditions:
\begin{enumerate}
\item $X$ and $Y$ are endowed with a regular action of $G$;
\item the contraction $f$ is $G$-equivariant.
\end{enumerate}
\end{definition}

We still denote by $E$ the exceptional divisor of the $G$-equivariant contraction $f\colon Y\to X$ and by $Z$ its centre. The action of $G$ on $X$ induces a regular action on $Z$.
The variety~$Z$ is said $G$-\emph{simply connected} if its $G$-equivariant fundamental group $\pi_1^G(Z)$ is trivial (see for instance \cite{Huisman,Looijenga}).
This implies that every connected, \'etale, $G$-equivariant morphism with target $Z$ is an isomorphism.

\begin{remark}\text{}
\begin{enumerate}
\item If the action of $G$ on $Z$ is transitive, then $Z$ is reduced and smooth, and every finite $G$-equivariant morphism is \'etale.
\item If $Z$ is a Fano manifold, it is automatically $G$-simply connected since every \'etale  cover of a Fano variety is an isomorphism (see \cite[Corollary 4.18(b)]{Debarre}). Indeed, take an \'etale cover ${\eta\colon Z'\to Z}$.
The Euler characteristic of $Z'$ is given by:
\[
\chi(Z')=(\deg \eta)\chi(Z),
\]
and $Z'$ is also a Fano manifold since $K_{Z'}=\eta^*K_Z$.
By the Kodaira vanishing theorem, the Euler characteristic of a Fano manifold is one, so $\eta$~is an isomorphism.
\item If $G$ is a semi-simple linear group, then its closed orbits are Fano manifolds by \cite[Corollary 2.1.7]{AGV}.
\end{enumerate}
\end{remark}

To compare with Theorem~\ref{th:main}, which deals with codimension $3$ orbits, the understanding of codimension $2$ orbits is an easy consequence of a result due to Ando~\cite{Ando}:

\begin{proposition}\label{codim2}
Every  $d$-dimensional $G$-equivariant divisorial contraction to a $(d-2)$-dimensional  $G$-orbit contained in the smooth locus of $X$ is a blow-up.
\end{proposition}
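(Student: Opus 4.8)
The plan is to verify the hypotheses of Ando's classification of divisorial contractions with one-dimensional fibres \cite{Ando}, using the homogeneity of the centre as the essential input. Since $Z$ is a $G$-orbit it is smooth, and as it lies in $X^\smooth$ and has codimension two the variety $X$ is smooth along $Z$. The mechanism I would use repeatedly is the following: because $G$ acts transitively on $Z$ and $f$ is $G$-equivariant, any closed $G$-invariant subset $W\subseteq E$ has $G$-invariant, hence empty or surjective, image in $Z$; in particular, if $W\neq\emptyset$ then $f(W)=Z$ and $\dim W\geq\dim Z=d-2$.

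First I would prove that $Y$ is smooth along $E$. The singular locus $Y^\sing$ is closed and $G$-invariant, so $Y^\sing\cap E$ is a closed $G$-invariant subset of $E$. If it were nonempty, the principle above would force $\dim(Y^\sing\cap E)\geq d-2$, that is $\operatorname{codim}_Y Y^\sing\leq 2$; this contradicts the fact that the terminal variety $Y$ is smooth in codimension two \cite[Lemma~1.3.1]{BS}. Hence $Y^\sing\cap E=\emptyset$ and $Y$ is smooth in a neighbourhood of $E$.

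Next I would show that $f|_E\colon E\to Z$ is equidimensional with one-dimensional fibres. A dimension count gives $\dim f^{-1}(z)=\dim E-\dim Z=1$ for general $z\in Z$, since $f$ is an isomorphism over $X\setminus Z$ and $E$ is irreducible of dimension $d-1$. The locus of points of $E$ whose fibre has dimension at least two is closed and $G$-invariant; were it nonempty it would surject onto $Z$ by the principle above, forcing every fibre to have dimension at least two and contradicting the generic computation. Thus every fibre of $f|_E$ is one-dimensional.

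With $Y$ smooth along $E$, all fibres of $f|_E$ one-dimensional, and the smooth codimension two centre $Z$ contained in the smooth locus of $X$, Ando's theorem \cite{Ando} applies and identifies $f$ near $E$, hence globally, with the blow-up $\BL_Z X\to X$; in particular $E\cong\IP(N_{Z/X})\to Z$ is a $\IP^1$-bundle and $\crep(E,X)=1$. I expect the only genuinely delicate points to be the control of $Y^\sing$ and the possible jumping of the fibre dimension of $f|_E$ --- precisely the phenomena that, for a non-homogeneous codimension two centre, allow terminal divisorial contractions that are not even weighted blow-ups (compare \S\ref{s:counterexample}). The orbit hypothesis is exactly what neutralises both obstacles, through the $G$-invariance principle that reduces each of them to a one-line dimension estimate.
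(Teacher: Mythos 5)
Your proposal is correct and follows essentially the same route as the paper: rule out singularities of $Y$ along $E$ and jumping of fibre dimension by pushing $G$-invariant closed loci down to the orbit $Z$ and comparing codimensions, then invoke Ando's theorem. You merely spell out the equidimensionality step, which the paper states in one line.
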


\begin{proof}
 Let $f\colon Y\to X$ be a $G$-equivariant divisorial contraction with centre $Z$ and exceptional divisor $E$. We first show that the singular locus $Y^\sing$ of $Y$ does not meet~$E$. Otherwise, $Y$ would have a singularity over $Z$, but $f$ is $G$-equivariant and the action of $G$ preserves the singularities, so we would get $f(Y^\sing)\supseteq Z$. Since $Z$ has codimension~$2$, this would imply that $Y^\sing$ has a component of codimension~$2$: this is impossible since $Y$~has terminal singularities. So $Y$ is smooth in a neighbourhood of $E$ and $f_{|E}\colon E \to Z$ is equidimensional since $f$ is $G$-equivariant.
We can thus apply \cite[Theorem~2.3]{Ando}, showing that $f$ is a blow-up.
\end{proof}

Our assumption of a $G$-action might look strong, however we believe that it is necessary. In Section~\ref{s:counterexample} we provide an example of an extremal contraction
in dimension four, non equivariant, which is not a weighted blow-up.


\section{Preliminaries on the MMP with scaling}\label{ss:MMP}

We recall for further use some notions on the Minimal Model Program (MMP), following the terminology and notation of Koll\'ar--Mori~\cite{KollarMori}.

\begin{definition}
 A pair $(X,\Delta)$ is called \klt if $K_X+\Delta$ is $\IQ$-Cartier, $\lfloor\Delta\rfloor=0$ and there exists a log resolution $\mu\colon\hat X\to X$ (see~\cite[Notation 0.4~(10)]{KollarMori})
 such that:
\[
K_{\hat X}+\mu_*^{-1}\left(\Delta\right)\sim_\IQ\mu^*(K_X+\Delta)+\sum_E a_E E, \text{ with }  a_E>-1 \text{ for all } E,
\]
where the sum runs over the exceptional divisors of~$\mu$.
A $\IQ$-factorial variety $X$ is called  \emph{terminal} if there exists a log resolution $\mu\colon\hat X\to X$
 such that:
\[
K_{\hat X}\sim_\IQ\mu^*K_X+\sum_E a_E E, \text{ with }  a_E>0 \text{ for all } E.
\]
where the sum runs over the exceptional prime divisors of~$\mu$.
\end{definition}

\begin{definition} Let $\widetilde W$, $W$ be two varieties together with a projective morphism $f\colon\widetilde W\to W$.
 A  $\mathbb Q$-divisor $D$ on $\widetilde W$ is \emph{movable} over $W$ if there is a positive integer $m$ such that the intersection of the base locus of $mD$ with every fibre of $f$ has codimension at least two in the fibre.
An $\mathbb R$-divisor $D$ is movable if there are $r_1,\ldots,r_k\in\mathbb R_{\geq0}$ and  movable $\mathbb Q$-divisors $D_1,\ldots,D_k$ such that $D\sim_\IR\sum_{i=1}^k r_i D_i$.
\end{definition}

The sum of two movable $\mathbb Q$-divisors is movable, and therefore numerical equivalence classes of movable divisors form a cone $\Mov(\widetilde W/W)\subseteq \NN^1(\widetilde W/W)$,
 which is in general neither open nor closed.

\begin{definition}
 The \emph{movable cone} $\overline{\Mov}(\widetilde W/W)$ of $\widetilde W$ over $W$ is the closure of $\Mov(\widetilde W/W)$ in $\NN^1(\widetilde W/W)$ with respect to the euclidean topology.
\end{definition}

\begin{remark}\label{rkmov2}
 If $D\in\overline{\Mov}(\widetilde W/W)$, then for every family of curves $\{\Gamma_t\}_t$ such that $\cup_t \Gamma_t$ has codimension one in a fibre of $f$, we have $D\cdot \Gamma_t\geq 0$ for all $t$.
\end{remark}

We recall the procedure called the \textit{Minimal Model Program with scaling of an ample divisor}.
The hypotheses here are stronger than the usual ones, but they are exactly what we need in the sequel.

\begin{construction}[Minimal Model Program with scaling]\label{MMPwsc}
(See \cite[\S 3.10]{BCHM}.)
Let $(\widetilde W,\Delta)$ be a \klt pair such that $\Delta$
is a $\mathbb{Q}$-divisor, and let $f\colon\widetilde W\to  W$  be a proper birational morphism such that $K_{\widetilde W}+\Delta$ is $f$-antiample.
Let $A$ be an ample $\mathbb{Q}$-divisor on $\widetilde W$ such
that $(\widetilde W,\Delta+A)$ is \klt and $K_{\widetilde W}+\Delta+A$ is $f$-nef.
We set:
\[
\lambda_0\coloneqq\inf\{t\in\mathbb{R}_{\geq0}\vert \; K_{\widetilde W}+\Delta+tA\;{\rm is}\;{\rm nef}\;{\rm over}\;  W\}.
\]
Then either $\lambda_0=0$ and $K_{\widetilde W}+\Delta$ is nef, or there is an extremal ray $R_0\in\overline{\NE}(\widetilde W/W)$ such that:
\[
(K_{\widetilde W}+\Delta)\cdot R_0<0 \text{ and  } (K_{\widetilde W}+\Delta+\lambda_0 A)\cdot R_0=0.
\]
If $K_{\widetilde W}+\Delta$ is $f$-nef, or if $R_0$~defines a Mori fibre space, we stop.
Otherwise $R_0$ gives a divisorial contraction  or a flip $\varphi_0\colon\widetilde W\dasharrow W_1$.
We recall that flips exist by~\cite[Corollary 1.4.1]{HMcK10}.
Let $\Delta_1$ and $A_1$ be the strict transforms of $\Delta$ and $A$.

We prove that $K_{ W_1}+\Delta_1+\lambda_0 A_1$ is $f_1$-nef.
Let $(p,q)\colon \widehat W\to \widetilde W\times W_1$ be a resolution of the indeterminacies of $\varphi_0$ (if $\varphi_0$ is a morphism, then $p$ is the identity).
By the negativity lemma~\cite[Lemma~3.39]{
KollarMori} we have:
\[
p^*(K_{\widetilde W}+\Delta+\lambda_0 A)=q^*(K_{ W_1}+\Delta_1+\lambda_0 A_1),
\]
so the result follows from the projection formula.

We further continue with $K_{W_1}+\Delta_1$ and $A_1$.
We finally obtain a sequence of divisorial contractions
and flips:
 \[
\xymatrix{(\widetilde W,\Delta) \eqqcolon (W_0,\Delta_0)\ar@{-->}[r]^-{\varphi_0} \ar[drr]_{f\eqqcolon f_0} &(W_1,\Delta_1)
\ar@{-->}[r]^-{\varphi_1} \ar[dr]^{f_1}& \cdots\ar@{-->}[r]^-{\varphi_{i-2}}& (W_{i-1},\Delta_{i-1})\ar@{-->}[r]^-{\varphi_{i-1}}\ar[dl]^{f_{i-1}} & \cdots\\
& &  W}
\]
At each step $i$, we set:
\[
\lambda_i\coloneqq\inf\{t\in\mathbb{R}_{\geq0}\vert \; K_{W_i}+\Delta_i+tA_i\;{\rm is}\;{\rm nef}\;{\rm over}\; W\},
\]
where $A_i$ (\resp $\Delta_i$) is the push-forward of $A$ (\resp $\Delta$) on $W_i$.
It follows from the definition that $0\leq \lambda_i\leq 1$ for all $i$ and that the sequence $(\lambda_i)_i$ is decreasing. We call this contruction the $(K_{\widetilde W}+\Delta)$\emph{-MMP with scaling of $A$ over $W$}.
\end{construction}

\begin{remark}\label{rkmmp}\text{}
\begin{enumerate}
 \item  In the above construction, if there is a flip, then the strict transform of $A$ is not ample anymore.
 Indeed, assume for instance that $\varphi_0$ is a flip and let $\phi\colon W_0\to Y$ and $\phi^+\colon W_1\to Y$ be the two small contractions involved.
 Then we know that:
 \begin{itemize}
  \item $K_{ W_0}+\Delta+\lambda_0 A$ is $\phi$-trivial and $K_{ W_1}+\Delta_1+\lambda_0 A_1$ is $\phi^+$-trivial;
  \item $K_{W_0}+\Delta$ is $\phi$-antiample and $K_{ W_1}+\Delta_1$ is $\phi^+$-ample,
 \end{itemize}
therefore $A_1$ is $\phi^+$-antiample.

\item\label{rkmmp2} Isomorphisms in codimension one preserve the movable cone.
Indeed, let $\varphi\colon W_0\dasharrow W_1$ be an isomorphism in codimension one.
It is enough to prove that for any movable $\mathbb Q$-divisor  $D$, its pushforward $\varphi_*D$ is a movable  $\mathbb Q$-divisor.
Let $(p,q)\colon \widehat W\to \widetilde W\times W$ be a resolution of the indeterminacies.
Since $\varphi$ is an isomorphism in codimension one, we have $\Exc(p)=\Exc(q)$ and for any $m$ such that $mD$ is Cartier we have:
\[
\Bs(|mp^*D|)\subseteq p^{-1}\Bs(|mD|).
\]
By definition, $\varphi_*D=q_* p^*D$ and  $\Bs(|mq_*p^*D|)=\Bs (q_*|mp^*D|)$.
An irreducible component of $p^{-1}\Bs(|mD|)$ has either codimension at least 2 or is contained in the exceptional locus of $q$.
Then:
\[
\Bs(|mq_*p^*D|)=\Bs (q_*|mp^*D|)=q(\Bs(|mp^*D|))\subseteq q(p^{-1}\Bs(|mD|)),
\]
and this last set has codimension two in $W_1$.
\end{enumerate}
\end{remark}

The following result is an easy termination lemma known to experts. We present here a proof for the reader's convenience, following closely~\cite[Theorem~2.3]{Fujino} (see also \cite[Corollary 1.4.2]{BCHM}).

\begin{lemma}\label{easyterm}
In the MMP with scaling of an ample divisor as above, there is no infinite sequence of flips.
\end{lemma}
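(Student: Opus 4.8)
The plan is to argue by contradiction and to reduce the statement to the finiteness of minimal models. Suppose there were an infinite sequence of flips. First I would dispose of the divisorial contractions: each of them strictly decreases the relative Picard number, which is a non-negative integer and is preserved under flips, so only finitely many divisorial contractions can occur. After discarding an initial segment I may therefore assume that every step $\varphi_i\colon W_i\dashrightarrow W_{i+1}$ is a flip. Flips are isomorphisms in codimension one, so all the $W_i$ are isomorphic in codimension one over $W$, and I would identify their relative Néron–Severi spaces $\NN^1(W_i/W)$ with a fixed real vector space $V$ by means of strict transforms. This is legitimate precisely because, by Remark~\ref{rkmmp2}, such an identification preserves the movable cone $\overline{\Mov}(W_0/W)\subseteq V$. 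Under this identification the classes of $K_{W_i}+\Delta_i$ and of $A_i$ become independent of $i$; call them $\kappa$ and $\alpha$.

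Next I would exploit the scaling numbers. By Construction~\ref{MMPwsc} the sequence $(\lambda_i)_i$ is non-increasing and bounded below, hence converges to some $\lambda\geq0$. For each $i$ the class $\kappa+\lambda_i\alpha$ lies on the boundary of the nef cone $\mathrm{Nef}(W_i/W)$, the flipped ray $R_i$ satisfies $(\kappa+\lambda_i\alpha)\cdot R_i=0$ and $\kappa\cdot R_i<0$, and $\mathrm{Nef}(W_i/W)$ is a subcone of $\overline{\Mov}(W_0/W)$. Tracking the segment $\{\kappa+t\alpha : \lambda\leq t\leq \lambda_0\}$, I would check that the sub-segment with $\lambda_i\le t\le\lambda_{i-1}$ lies in $\mathrm{Nef}(W_i/W)$: the negativity-lemma identity recorded in Construction~\ref{MMPwsc} shows that nefness of $\kappa+\lambda_{i-1}\alpha$ is preserved by the flip $\varphi_{i-1}$, and it persists down to the wall $t=\lambda_i$. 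Since a straight line meets a convex cone in a connected set, the cones $\mathrm{Nef}(W_i/W)$ traversed by the segment are pairwise distinct.

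The conclusion would then follow from the finiteness of minimal models. Because $A$ is ample, the boundary $\Delta+\lambda_0 A$ is big, so by finiteness of models \cite[Corollary~1.4.2]{BCHM} the movable cone $\overline{\Mov}(W_0/W)$ decomposes into only finitely many nef chambers $\mathrm{Nef}(W'/W)$, as $W'$ ranges over the $\mathbb Q$-factorial small modifications of $W_0$ over $W$. An infinite sequence of flips would produce infinitely many pairwise distinct such chambers, a contradiction; this is the mechanism behind \cite[Theorem~2.3]{Fujino}, which I would follow closely.

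The hard part will be the behaviour at the limit $\lambda=\lim_i\lambda_i$. When the $\lambda_i$ are strictly decreasing, the sub-segments above have positive length, and the convexity argument by itself already yields infinitely many distinct chambers. When instead $\lambda_i$ is eventually constant, the steps become $(\kappa+\lambda\alpha)$-trivial flops concentrated at the single class $\kappa+\lambda\alpha$, the line-segment argument degenerates to a point, and ruling out an infinite cascade of flops at a fixed value of $t$ is exactly the place where the finiteness of marked small modifications of $W_0$ over $W$ must be invoked in an essential way. This limiting and flop analysis, rather than the numerical bookkeeping, is the crux of the argument.
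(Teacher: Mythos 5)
Your proposal follows the chamber-counting strategy of BCHM (identify all $\NN^1(W_i/W)$ via strict transforms, track the segment $\{\kappa+t\alpha\}$ through the nef chambers of the small modifications, and invoke finiteness of models), whereas the paper follows Fujino's movable-cone argument, and the two are genuinely different. The paper's proof has three steps: first, the hypothesis that $K_{\widetilde W}+\Delta$ is $f$-antiample is used to produce a non-zero effective divisor $D_i\sim_{\IR,f}-(K_{W_i}+\Delta_i)$ and conclude via Remark~\ref{rkmov2} that $K_{W_i}+\Delta_i\notin\overline{\Mov}(W_i/W)$ for every $i$; second, one reduces to $\lambda=\lim_i\lambda_i=0$ by absorbing $\lambda A$ into the boundary; third, one writes $K_{W_n}+\Delta_n$ as a limit in $\NN^1(W_n/W)$ of the strict transforms of the ample classes $K_{W_i}+\Delta_i+\lambda_i A_i+G_i$, which are movable by Remark~\ref{rkmmp}(\ref{rkmmp2}), so that $K_{W_n}+\Delta_n\in\overline{\Mov}(W_n/W)$ --- a contradiction. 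Note that this limit argument is insensitive to whether the $\lambda_i$ decrease strictly, so no separate analysis of flops is needed, and that the $f$-antiampleness of $K_{\widetilde W}+\Delta$ --- which your proposal never uses --- is exactly what closes the argument.

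As written, your proposal is not yet a proof: you explicitly defer the case where $\lambda_i$ is eventually constant, i.e.\ an infinite cascade of $(K+\Delta+\lambda A)$-trivial flops at a fixed parameter value, and you correctly identify this as the crux. But ``finiteness of marked small modifications'' does not by itself forbid an infinite sequence of flops, since a priori the sequence could revisit models; ruling this out is the genuinely delicate part of the BCHM termination argument (one must show no marked model recurs, using finiteness of weak log canonical models for the pairs $(W_0,\Delta+tA)$), and it is precisely the work your outline leaves undone. Two further points: your line-segment/convexity argument for pairwise distinctness of the chambers only functions when $\lambda_i<\lambda_{i-1}$ strictly, so it cannot be patched to cover the flop case; and the reference \cite[Corollary~1.4.2]{BCHM} is the termination statement itself rather than the finiteness of models (that is Theorem~E there), so invoking it for the chamber decomposition is close to assuming what is to be proved. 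If you want a complete and self-contained argument at the level of this paper, the movable-cone route is both shorter and avoids the flop analysis entirely.
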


\begin{proof}
We prove the statement by contradiction. Assume that $n$ is an integer such that $\varphi_{i}$ is a flip for every $i\geq n$.

{\bfseries Step 1.} We prove first that $K_{W_i}+\Delta_i\not\in\overline{\Mov}(W_i/ W)$ for every $i$.
 Indeed $-(K_{W_i}+\Delta_i)$ is the pushforward of $-(K_W+\Delta)$. This divisor is ample and we can choose an effectif divisor $D$ such that
 $-(K_W+\Delta)\sim_{\mathbb{R},f} D$ and such that the support of~$D$ is not contained in the exceptional locus of $\varphi_{i-1}\circ\cdots\circ\varphi_1$. Denote by~$D_i$ the pushforward of $D$ on $W_i$.
 Then $-(K_{W_i}+\Delta_i)\sim_{\mathbb{R},f} D_i$. The divisor~$D_i$ is effective and non-zero.
Thus, for any family of curves $\{\Gamma_t\}_t$ such that $\cup_t\Gamma_t$ has codimension one in a fibre of~$f_i$ and which is not contained in $D_i$, we have $D_i\cdot\Gamma_t>0$, and by Remark~\ref{rkmov2} we have $K_{W_i}+\Delta_i\not\in\overline{\Mov}(W_i/ W)$. In particular, $K_{W_n}+\Delta_n\not\in\overline{\Mov}(W_n/ W)$.

 {\bfseries Step 2.} Let $\lambda\coloneqq\lim_{i\to\infty} \lambda_i$. We prove that we can assume $\lambda=0$.
 Indeed, if $\lambda>0$, we pick $\Delta'\sim_{\mathbb{R}}\lambda A$ such that the pair $(\widetilde W,\Delta+\Delta')$ is \klt and we run the $(K_{\widetilde W}+\Delta+\Delta')$-MMP with scaling of $A$ over $ W$.
 We notice that each step of the $(K_{\widetilde W}+\Delta+\Delta')$-MMP with scaling of $A$ is a step of the $(K_{\widetilde W}+\Delta)$-MMP with scaling of $A$ over $ W$.
 Moreover:
\[\mu_i\coloneqq\inf\{t\in\mathbb{R}_{\geq0}\vert \; K_{W_i}+\Delta_i+\Delta'_i+tA_i\;{\rm is}\;{\rm nef}\;{\rm over}\;  W\}=\lambda_i-\lambda,
\]
so $\lim_{i\to\infty} \mu_i=0$. we can thus assume that $\lambda=0$.

 {\bfseries Step 3.}
Take for any $i\geq n$ an $f_i$-ample $\mathbb{Q}$-divisor $G_i$ on $W_i$ such that:
\[
\lim_{i\to\infty} G_{i,n}=0\in \NN^1( W_n/W),
\]
where $ G_{i,n}$ is the
strict transform of $G_i$ on $W_n$.
For this, fix an euclidean norm~$\|\cdot\|$  on $ \NN^1( W_n/W)$ and let $A_i$ be an $f_i$-ample $\mathbb{Q}$-divisor on $W_i$ and $A_{i,n}$ its strict transform.
Then put $G_{i,n}=\frac{A_{i,n}}{i\|A_{i,n}\|}.$
We note that the divisor:
\[
K_{W_i}+\Delta_i+\lambda_i A_i+G_i
\]
 is ample over $W$ for every $i$, since by construction $K_{W_i}+\Delta_i+\lambda_i A_i$ is nef.
By Remark~\ref{rkmmp}(\ref{rkmmp2}), the strict transform:
\[
K_{ W_n}+\Delta_n+\lambda_i A_n+G_{i,n}
\]
is
movable on~$W_n$ for every~$i$. Thus $K_{ W_n}+\Delta_n$ is a limit
of movable $\mathbb{R}$-divisors in $\NN^1(W_n/ W)$ and therefore $K_{ W_n}+\Delta_n\in \overline{\Mov}(W_n/ W)$: this contradicts Step~1, so there is no infinite sequence of flips.
\end{proof}

Lemma \ref{easyterm} says that, in the setup of Construction \ref{MMPwsc}, every MMP with scaling of an ample divisor terminates and, in particular, minimal models exist.

\section{Preliminaries on equivariant birational transformations}\label{s:equiv}

We gather some classical constructions on  $G$-equivariant birational transformations that will be needed in the sequel. We consider  a variety  $X$ and a connected algebraic subgroup $G$ of $\Aut^0(X)$.

\subsection{Equivariant blow-ups}
Let $Z$ be a $G$-invariant subvariety of $X$.
Then $G$ acts on the ideal sheaf $I_Z$ of $Z$, so there is a natural action of $G$ on the blow-up $\BL_Z X\coloneqq\Proj\left(\oplus_{k\geq 0} I_Z^{k}\right)$ of $Z$ on $X$ and the natural morphism $\BL_Z X\to X$ is $G$-equivariant.

\subsection{Equivariant weighted blow-ups}\label{ss:wbl}

\begin{definition}
 The weighted blow-up of $X$ of weights $\omega=(\omega_1,\ldots,\omega_r)$, where the positive integers $\omega_i$ have no common divisor, with smooth centre~$Z$ of codimension~$r$, contained in the smooth locus of $X$ is the normalisation of the projectivisation:
\[
\BL_Z^\omega X\coloneqq\Proj\left(\cO_X\oplus\bigoplus_{k\geq 1}\cI_k\right)\to X,
\]
where the sheaves $\cI_k$ are ideal sheaves on $X$ such that, locally analytically on~$X$, there
are smooth coordinate functions $x_1,\ldots,x_r$ defining $Z$,
and for every $k$, the sheaf $\cI_k$ is generated by the monomials $x_1^{a_1}\cdots x_r^{a_r}$ with $\omega_1 a_1+\cdots+ \omega_r a_r\geq k$. A weighted blow-up is called $G$-equivariant if $g(\cI_k)=\cI_k$ for every $g\in G$ and $k\geq 1$. This means that $g^\ast x_1,\ldots,g^\ast x_r$ are smooth coordinate functions defining locally $Z$, for any $g\in G$.
In this case, the morphism $\BL_Z^\omega X\to X$ is $G$-equivariant.
\end{definition}

In the sequel, we consider the weighted blow-up of a $d$-dimensional variety~$X$ along a codimension~$r=3$ smooth centre~$Z$ contained in the smooth locus of $X$, with weights $\omega=(n,m,1)$ such that $n$ and $m$ are coprime. From now on, we assume that $X=\Spec \IC[x_1,\ldots,x_d]$ and that $Z$ is given by the equations $x_1=x_2=x_3=0$. Then $\BL_Z^\omega X=\Proj\left(\bigoplus_{k\geq 0} I_k\right)$,
where $I_k\subset\IC[x_1,\ldots,x_d]$ is the ideal generated by the monomials $x_1^{a_1}x_2^{a_2}x_3^{a_3}$ with $na_1+ma_2+a_3\geq k$. We can define a surjective map of graded rings:
\[
\Phi\colon\IC[x_1,\ldots,x_d][u,v,w]\to \bigoplus_{k\geq 0} I_k,\quad u\mapsto x_1,\quad v\mapsto x_2,\quad w\mapsto x_3,
\]
where, on the left hand side, the degrees are $\deg(x_i)=0$ for all $i$, $\deg(u)=n$, $\deg(v)=m$ and $\deg(w)=1$. Then $\BL_Z^\omega X$ is isomorphic to the proper closed subscheme of $\IC^d\times\IP(n,m,1)$ defined by the weighted homogeneous polynomials which generate the kernel of $\Phi$, that is:
\begin{equation}
\label{eq:8}
x_1^m v^n=x_2^n u^m,\quad x_1 w^n=x_3^n u,\quad x_2 w^m=x_3^m v.
\end{equation}
This shows that the weighted blow-up may be equally defined as the normalization of the closure of the image of the map:
\[
\left(\IC^d\setminus Z\right)\to \left(\IC^d\setminus Z\right)\times\IP(n,m,1),\quad (x_1,\ldots,x_d)\mapsto (x_1,\ldots,x_d,[x_1:x_2:x_3]).
\]

The exceptional divisor $E$ of the weighted blow-up is isomorphic to the weighted projective space $\IP(n,m,1)$.
The valuation $v_\omega\colon \IC(x_1,\ldots,x_d)\setminus\{0\}\to\IZ$ associated to this weighted blow-up is characterized by:
\[
 v_\omega(x_1)=n,\quad v_\omega(x_2)=m,\quad v_\omega(x_3)=1,\quad v_\omega(x_i)=0\quad \forall i\geq 4.
\]

\subsection{Equivariant resolution of singularities}\label{ss:Gres}
The singular locus of $X$ is a $G$-invariant subvariety of $X$.
By \cite[Proposition~3.9.1, Theorem~3.35 \& Theorem~3.36]{Kollar1}
there is a smooth variety $\widetilde X$ together with a regular action of $G$ on $\tilde X$
and a $G$-equivariant birational morphism $\widetilde X\to X$.

\subsection{Equivariant MMP} \label{ss:GMMP}

Any MMP on $X$ is automatically $G$-equivariant. To see it, consider the first step  $X\dasharrow X_1$ of an MMP:
\begin{itemize}
\item If it is an extremal contraction, this is a consequence of the  Blanchard
lemma~\cite[Proposition~4.2.1]{BSU}. We present here an alternative proof. The group $G$ acts trivially on the extremal rays contained in the $K_X$-negative part of the Mori cone, since these rays are discrete.
 Then the extremal ray corresponding to the contraction $X\to X_1$ is $G$-invariant and so is the locus spanned by it, so $X_1$ inherites an action of $G$ making the contraction $G$-equivariant.

\item If it is a flip, given by the composition of two small contractions $\mu\colon X\to Y$
 and $\mu^+\colon X_1\to Y$,
 by the discussion above, there is a $G$-action on $Y$ such that $\mu$ is $G$-equivariant.
 Moreover we have:
\[
X_1\cong \Proj_Y\left( \bigoplus_m\mu_*\mathcal{O}_X(m(K_X))\right).
\]
Since $K_X$ is $G$-invariant, the group $G$ acts on $\mathcal{O}_X(mK_X)$  for any $m$ such that $mK_X$ is Cartier, and subsequently on $X_1$.
\end{itemize}


\section{The tower}\label{s:tower}

Let us recall the setup. We consider a $d$-dimensional $G$-equivariant divisorial contraction $f\colon Y \to X$ with exceptional divisor~$E$, as in Definitions~\ref{def:divisorial} \& \ref{def:Gdivisorial}.
We further assume that the center $Z\subset X$ of the contraction is smooth and contained in the smooth locus of $X$.


\subsection{Construction of the tower}\label{ss:tower}

Starting from $X_0\coloneqq X$ and $Z_0\coloneqq Z$,
we construct inductively the following objects: $G$-equivariant birational morphisms $g_i\colon X_i\to X_{i-1}$,
integral closed subschemes $Z_i\subset X_i$ and prime divisors $E_i$ on $X_i$. This construction follows the same lines as those of Kawakita~\cite[Construction~3.1]{Kawakita}, we recall it for the reader's convenience.

\begin{enumerate}
\item We consider the blow-up $b'_i\colon \BL_{Z_{i-1}}X_{i-1}\to X_{i-1}$ and we take $b_i\colon X_i\to \BL_{Z_{i-1}}X_{i-1}$ a resolution of singularities. We put $g_i\coloneqq b'_i\circ b_i$. Since $Z_{i-1}$~is generically smooth, the exceptional divisor $E'_i$ of $b'_i$ is smooth over the generic point of $Z_{i-1}$, and so is its blow-up $\BL_{Z_{i-1}}X_{i-1}$. In particular, we can
find~$b_i$ such that none of the $b_i$-exceptional divisors surjects onto~$Z_{i-1}$.

\item We define $Z_i$ as the centre of $E$ in $X_i$, that is the closure of the image of~$E$ under the birational map~$g_i^{-1}\circ f$.  Note that $Z_i$~is reduced and generically smooth.

\item We denote by $E_i\subset X_i$ the strict transform of $E'_i$. Since $Z_i$ surjects onto~$Z_{i-1}$, the divisor $E_i$ is generically smooth along~$Z_i$, which is contained in~$E_i$ but in none of the $b_i$-exceptional divisors. Thus $E_i$ is the only  prime exceptional divisor of $g_i$ which contains $Z_i$, and furthermore $Z_i$ has multiplicity one along $E_i$.
\end{enumerate}
The construction is summarized in Diagram~\ref{diag:tower} and is illustrated in Figure~\ref{pic:tower} (see Appendix).

\begin{equation}
\label{diag:tower}
\xymatrix{
W\ar[ddd]^-{h'}\ar[rrr]^-{f'}&&& X_n\ar@{-->}@/_2pc/[dddlll]_-g\ar[d]^-{g_n}\ar[r]_-{b_n}\ar@/_4pc/[ddd]_-h\ar@/_2pc/[dd]_-{h_1}& \BL_{Z_{n-1}}X_{n-1}\ar[dl]^-{b'_n}\\
&&&X_{n-1}\ar@{..>}[d] \ar[r]_-{b_{n-1}}&\BL_{Z_{n-2}}X_{n-2}\ar@{..>}[dl] \\
&&&X_1\ar[r]_-{b_1}\ar[d]^-{g_1}&\BL_{Z_0} X_0\ar[dl]^-{b'_1}\\
Y\ar[rrr]^-f &&& X=X_0&}
\end{equation}

The process ends up at some $n$-th step, when $Z_n=E_n$ is a prime divisor.
Let us explain why it does so. At step $i\geq 1$, denote by $\crep(E_i,X_{i-1})\geq 1$ the discrepancy of~$E_i$ with respect to~$X_{i-1}$. Since~$Z_i$ has multiplicity one along~$E_i$, we have:
\[
g_{i+1}^\ast E_i\equiv (g^{-1}_{i+1})_\ast E_i+ E_{i+1}+\text{other components}.
\]
It is then easy to compute that:
\[
\crep(E_{i+1},X_{i-1})=\crep(E_{i+1},X_i)+\crep(E_i,X_{i-1}),
\]
and similarly, going down by induction, for any $1\leq j\leq i$:
\begin{equation}\label{eq:7}
\crep(E_{i+1},X_{i-j})\geq\crep(E_{i+1},X_i)+\crep(E_i,X_{i-j}),
\end{equation}
where the inequality comes from the possible contribution of $E_{i+1}$ to the total transforms of the birational images of the divisors $E_{i-1},\ldots,E_1$, for instance:
\begin{equation}\label{eq3}
g_{i+1}^\ast (g_i^{-1})_\ast E_{i-1}\equiv (g_{i+1}^{-1})_\ast (g_i^{-1})_\ast E_{i-1}+\mult_{Z_i}((g_i^{-1})_\ast E_{i-1})E_{i+1}.
\end{equation}
In particular, $\crep(E_{i},X_{i-j})<\crep(E_{i+1},X_{i-j})$ for any $j$. Moreover the first blow-up is the one of a centre of codimension three, so $\crep(E_1,X)= 2$ and:
\[
2= \crep(E_1,X)<\cdots<\crep(E_i,X)<\cdots<\crep(E_n,X)=\crep(E,X)=a.
\]
This shows that the process ends up after at most $a-1$ steps. We finally take an elimination of
 indeterminacies~$W$ of the birational map~$f^{-1}\circ h$.

We put:
\begin{align*}
h&\coloneqq g_1\circ\cdots\circ g_n,\\
h_i&\coloneqq g_{i+1}\circ\cdots\circ g_n,\quad\forall i=1,\ldots,n-1,\\
g_{i,j}&\coloneqq g_{j+1}\circ\cdots\circ g_i\colon X_i\to X_j, \quad\forall i>j\geq 0.
\end{align*}
For simplicity, in the sequel we still denote by~$E_j$ the divisor
$(g_{i,j}^{-1})_\ast E_j$ on~$X_i$. Recall that we denote by~$n\leq a-1$ the number of steps of the construction, which stops when $Z_n=E_n$ is a prime divisor. We also denote by $m\leq n$ the largest integer such that $\dim Z_{m-1}=\dim Z$.
Using the results recalled in \S\ref{s:equiv}, we see that we may assume that the tower construction~(\ref{diag:tower}) is  $G$-equivariant.

\begin{remark} Since $E$ and $E_n$ define the same valuation on $X$, we have:
\[
  f_\ast\cO_Y(-iE) \cong h_\ast\cO_{X_n}(-i E_n) \quad\forall i
\]
(see also~\cite[Remark~3.4]{Kawakita}).
By the same argument as above, it is easy to see that for any $i$, the possible contribution of $E_{i+1}$ to the total transforms of the birational images of the divisors $E_{i-1},\ldots,E_1$,
introduced in Equation~\eqref{eq3} have multiplicity at most one. Proposition~\ref{charactwbu} below says in particular that there is no such contribution
during the process exactly when $f_\ast\cO_Y(-2E)\neq\cI_Z$,
or equivalently $h_\ast\cO_{X_n}(-2E_n)\neq\cI_Z$.
\end{remark}

\subsection{Characterisation of the tower as a weighted blow-up}

Recall that if $f\colon Y\to X$ is a divisorial contraction with exceptional divisor $E$, the local ring $\cO_{Y,E}$ is a discrete valuation ring.
The basic observation to interpret the tower construction as a weighted blow-up is the
comparison of valuations (see \S\ref{ss:wbl}).

\begin{lemma}[\protect{\cite[Lemma~3.4]{Kawakita}}]
Two divisorial contractions over $X$ whose exceptional divisors define the same valuation are isomorphic.
\end{lemma}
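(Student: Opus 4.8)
The plan is to reconstruct each divisorial contraction from the divisorial valuation its exceptional divisor defines, so that two contractions inducing the same valuation are assembled from identical data. Write $v\colon\IC(X)^\ast\to\IZ$ for the common valuation defined by $E$ and $E'$, and let $f\colon Y\to X$, $f'\colon Y'\to X$ be the two contractions.

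First I would check that $-E$ is $f$-ample. Since $K_Y$ is $f$-antiample and $K_Y\sim_{\IQ}f^\ast K_X+aE$ with $a>0$, any curve $C$ contracted by $f$ satisfies $0>K_Y\cdot C=aE\cdot C$, hence $E\cdot C<0$. Because $f$ has relative Picard number one, the class of $E$ spans $\NN^1(Y/X)$, so this single sign computation shows that $-E$ is $f$-ample; fix a positive integer $N$ with $NE$ Cartier on $Y$. The key identification is then that the sheaves $f_\ast\cO_Y(-iE)$ are exactly the valuation ideals of $v$. Since $X$ is normal and $f$ is proper birational, $f_\ast\cO_Y=\cO_X$, so for $i\ge0$ the sheaf $f_\ast\cO_Y(-iE)\subseteq\cO_X$ is an ideal sheaf; a local regular function $\phi$ on $X$ lies in it precisely when $\operatorname{ord}_E(f^\ast\phi)\ge i$, that is $v(\phi)\ge i$. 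Thus
\[
f_\ast\cO_Y(-iE)=\{\phi\in\cO_X \ :\ v(\phi)\ge i\}=:\cI_i,
\]
an ideal that depends only on $v$ and not on the model. Running the same argument for $f'$ gives $f'_\ast\cO_{Y'}(-iE')=\cI_i$ for every $i\ge0$.

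Finally I would recover both varieties as relative Proj. Because $-NE$ is $f$-ample and Cartier, the standard relative-ampleness formalism provides an isomorphism over $X$:
\[
Y\cong\Proj_X\Bigl(\bigoplus_{i\ge 0} f_\ast\cO_Y(-iNE)\Bigr),
\]
and finite generation of this $\cO_X$-algebra is guaranteed since $f$ is projective and $-NE$ is $f$-ample. Choosing $N'$ with $N'E'$ Cartier yields the analogous presentation of $Y'$. Passing to a common multiple $M$ of $N$ and $N'$, so that both $ME$ and $ME'$ are Cartier, and invoking the invariance of $\Proj$ under passage to a Veronese subalgebra, both presentations become
\[
\Proj_X\Bigl(\bigoplus_{i\ge 0}\cI_{iM}\Bigr),
\]
the very same graded algebra by the previous step. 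Hence $Y\cong Y'$ as schemes over $X$, which is the asserted isomorphism of divisorial contractions.

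The step I expect to be most delicate is this last one: justifying that the contraction is faithfully recovered as the relative Proj of the section algebra of $-NE$, and then matching the two presentations. This rests on the $f$-ampleness from the first step together with finite generation, and requires care in moving to the common Cartier multiple $M$ so that the two Proj descriptions are literally the same graded $\cO_X$-algebra rather than merely Veronese-equivalent copies with incompatible gradings.
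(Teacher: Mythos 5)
Your argument is correct and coincides with the proof of the cited source: the paper itself does not reprove this statement but imports it from Kawakita's Lemma~3.4, whose proof is exactly your reconstruction of $Y$ as $\Proj_X$ of the graded algebra of valuation ideals $\cI_i=f_\ast\cO_Y(-iE)$, using that $-E$ is $f$-ample because the relative Picard number is one and the discrepancy is positive. The one point worth stating explicitly is that a section of $\cO_Y(-iE)$ over $f^{-1}(U)$ is automatically regular on $U$ (its strict-transform part must be effective, and $f_\ast\cO_Y=\cO_X$ by normality), which is what makes $\cI_i$ an ideal of $\cO_X$ depending only on the valuation.
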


This means that, to prove that the divisorial contraction ${f\colon Y\to X}$ of Theorem~\ref{th:main} is a $G$-equivariant weighted blow-up, it is enough to show that
the exceptional divisor $E_n$ obtained by the tower construction in \S\ref{ss:tower} defines the same valuation as those of a $G$-equivariant weighted blow-up over $X$.
To do this, one of the key results in Kawakita~\cite{Kawakita} is a characterisation of weighted blow-ups of smooth points in a threefold. We prove a similar result as \cite[Proposition 3.6]{Kawakita} in our setup.

\begin{proposition}[]\label{charactwbu}
Let $f\colon Y\to X$ be a $d$-dimensional $G$-equivariant divisorial contraction to a $(d-3)$-dimensional $G$-orbit contained in the smooth locus of $X$.
Keeping the same notation as above, the divisor $E_n$ defines the same valuation as those of an exceptional divisor obtained by a $G$-equivariant weighted blow-up of weights $(n,m,1)$
 if and only if the following
conditions hold for every  analytic open set $U$ in $X$:
{\rm \begin{enumerate}
 \item\label{charactwbu1} {\it $f_*\cO_Y(-2E)\vert_U\neq \cI_Z\vert_U$, that is $h_*\cO_{X_n}(-2E_n)\vert_U\neq \cI_Z\vert_U$;}
 \item \label{charactwbu2} {\it $f_*\cO_Y(-nE)\vert_U\not\subseteq \cI_Z^{2}\vert_U$, that is $h_*\cO_{X_n}(-nE_n)\vert_U\not\subseteq \cI_Z^{2}\vert_U$.}
\end{enumerate}}

\end{proposition}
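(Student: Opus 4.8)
The plan is to prove the two implications separately: the forward one by a direct monomial computation, and the converse by using the $G$-action and the homogeneity of $Z$ to reduce the study of the valuation $v$ defined by $E_n$ (equivalently by $E$) to a transverse three-dimensional situation. Throughout I use the identification of the $i$-th valuation ideal $f_\ast\cO_Y(-iE)=\{g\in\cO_X:v(g)\geq i\}=h_\ast\cO_{X_n}(-iE_n)$ (the last equality because $E$ and $E_n$ define the same valuation), together with $f_\ast\cO_Y(-E)=\cI_Z$, which holds because $E$ dominates $Z$, and $\cI_Z^2\subseteq f_\ast\cO_Y(-2E)$, since $v(x_ix_j)=v(x_i)+v(x_j)\geq 2$. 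I work locally analytically near a point $z_0\in Z$, with coordinates $x_1,\dots,x_d$ such that $\cI_Z=(x_1,x_2,x_3)$. \emph{Forward direction:} if $v=v_\omega$ with $\omega=(n,m,1)$, then the valuation ideals are the monomial ideals of \S\ref{ss:wbl}; since $v(x_3)=1$ we get $x_3\in f_\ast\cO_Y(-E)\setminus f_\ast\cO_Y(-2E)$, giving~(\ref{charactwbu1}), and since $x_1\in\cI_Z\setminus\cI_Z^2$ has $v(x_1)=n$ we get $x_1\in f_\ast\cO_Y(-nE)\setminus\cI_Z^2$, giving~(\ref{charactwbu2}).

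For the converse, assume~(\ref{charactwbu1}) and~(\ref{charactwbu2}). Because the valuation ideals are $G$-invariant and $Z=G/H$ is homogeneous, the transverse structure of $v$ is uniform along $Z$, so it suffices to analyse $v$ on a three-dimensional germ transverse to $Z$ at $z_0$; the directions $x_4,\dots,x_d$ along $Z$ carry $v$-value zero, and the three transverse coordinates may be chosen $H$-semi-invariant and adapted to the $v$-filtration on the conormal $\cI_Z/\cI_Z^2$, so that $v(x_i)$ is well defined on each coordinate. Condition~(\ref{charactwbu1}) yields $f_\ast\cO_Y(-2E)\subsetneq\cI_Z$, hence a coordinate of $v$-value $1$; as $1$ is the minimal value of $v$ on $\cI_Z$, this is the weight-one direction $x_3$. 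Condition~(\ref{charactwbu2}) provides an element of $f_\ast\cO_Y(-nE)$ with nonzero image in $\cI_Z/\cI_Z^2$, which therefore lies in the span of those coordinates of $v$-value $\geq n$; so some coordinate $x_1$ has $v(x_1)\geq n$, and the discrepancy bookkeeping of \S\ref{ss:tower} (the strictly increasing chain $2=\crep(E_1,X)<\cdots<\crep(E_n,X)=a$ with $n\leq a-1$) identifies $v(x_1)=n$. The last coordinate $x_2$ then has $v(x_2)=m$, where $m$ is the largest index for which the centre keeps dimension $\dim Z$ in the tower; the process stops exactly when $Z_n=E_n$ becomes a prime divisor.

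With these coordinates the valuation inequality $v(g)\geq\min_\alpha v(x_1^{a_1}x_2^{a_2}x_3^{a_3})=v_\omega(g)$ gives $v\geq v_\omega$. The crux, and the main obstacle, is the reverse inequality $v\leq v_\omega$: one must rule out that cancellations among leading monomials raise the valuation, i.e.\ show that the associated graded ring $\bigoplus_i f_\ast\cO_Y(-iE)/f_\ast\cO_Y(-(i+1)E)$ is the weighted polynomial ring in $x_1,x_2,x_3$, so that the valuation ideals are exactly the monomial ideals $\cI_i$ of the $(n,m,1)$-weighted blow-up. This is where both hypotheses are essential: condition~(\ref{charactwbu1}) fixes the weight-one direction (excluding valuations with all weights $\geq 2$) and condition~(\ref{charactwbu2}) fixes the top weight (excluding valuations for which $f_\ast\cO_Y(-nE)\subseteq\cI_Z^2$, i.e.\ whose value $n$ is attained only on $\cI_Z^2$), while the tower of \S\ref{ss:tower} computes the ideals $f_\ast\cO_Y(-iE)$ step by step and certifies the absence of surplus vanishing. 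Once $v=v_\omega$ is established, the $G$-equivariant weighted blow-up built from the semi-invariant coordinates $x_1,x_2,x_3$ with weights $(n,m,1)$ has exceptional valuation $v_\omega=v$; its $G$-equivariance follows because the defining ideals $\cI_i=f_\ast\cO_Y(-iE)$ are $G$-invariant, the $x_i$ being semi-invariant. This proves both implications.
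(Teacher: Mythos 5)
Your forward implication matches the paper's argument and is fine. The converse, however, has a genuine gap exactly where you flag ``the crux, and the main obstacle'': after obtaining $v\geq v_\omega$ you must show $v\leq v_\omega$ (equivalently, that $v$ is the monomial valuation of weights $(n,m,1)$ in suitable coordinates), and your proof of this consists of the assertion that the tower ``certifies the absence of surplus vanishing''. That is precisely the statement to be proven, and no argument is given. The paper closes this gap in two concrete steps that are absent from your proposal. First, Condition~(\ref{charactwbu1}) is used structurally, not merely to produce a coordinate of value one: if during the tower construction some centre were contained in the birational image of an earlier exceptional divisor, the multiplicity of $E_n$ in $h_1^\ast E_1$ would be at least two, forcing $\cI_Z\vert_U\subset h_\ast\cO_{X_n}(-2E_n)\vert_U$ and contradicting~(\ref{charactwbu1}); this yields $g_{i,j}^\ast E_j=\sum_{k=j}^i E_k+\text{other components}$ with all coefficients equal to one. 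Second, the coordinates $x_1,x_2,x_3$ are chosen adapted to the centres $Z_i$ (each $Z_i$ is cut out by the strict transforms of $x_1$, $x_2$ and a local equation of $E_i$), so that the pullbacks $g_{i,0}^\ast\Div(x_j)$ can be computed inductively through the tower, giving $v_{E_n}(x_1)=n$, $v_{E_n}(x_2)=m$, $v_{E_n}(x_3)=1$ exactly and identifying $v_{E_n}$ with $v_\omega$. Your ``discrepancy bookkeeping'' does not substitute for this computation.

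Two smaller points. Your deduction that Condition~(\ref{charactwbu2}) produces a coordinate of value at least $n$ ``in the span of those coordinates of $v$-value $\geq n$'' via a filtration on $\cI_Z/\cI_Z^2$ is not justified: writing $g=\ell+q$ with $\ell$ linear and $q\in\cI_Z^2$, the ultrametric inequality only gives $v(\ell)\geq\min(v(g),v(q))$ with $v(q)\geq 2$, which does not force $v(\ell)\geq n$. The paper sidesteps this by taking the element $g$ itself, which has multiplicity one along $Z$, as the coordinate $x_1$. Finally, the appeal to $H$-semi-invariant coordinates for the stabiliser $H$ is both unjustified in general and unnecessary: the paper simply transports the local chart along $Z$ by the $G$-action to conclude that the weights are the same at every point of the orbit.
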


\begin{proof}
Assume first that $f$ is a $G$-equivariant weighted blow-up of $Z$ with weights $(n,m,1)$. Let~$U$ be an analytic open set of~$X$, meeting~$Z$. Since $Z$~is smooth and contained in the smooth locus of~$X$, shrinking~$U$ if necessary, we may assume that there are local coordinates $x_1,\ldots,x_d$ on~$U$ such that $Z\cap U$ is given by the equations:
\[x_1=x_2=x_3=0,
\]
with the property that for any $k\geq 0$ we have:
\begin{equation}\label{eq4}
f_\ast\cO_Y(-kE)\vert_U=\cI_k\vert_U,
\end{equation}
where $\cI_k$ is the ideal sheaf generated over $U$ by the monomials $x_1^{a_1}x_2^{a_2}x_3^{a_3}$ with $na_1+ma_2+a_3\geq k$ (see \S\ref{ss:wbl}). Taking $k=2$, we see that $x_3\notin \cI_2\vert_U$, so $\cI_2\vert_U\neq\cI_Z\vert_U$: this is Condition~(\ref{charactwbu1}). Taking $k=n$, we see that $x_1\in\cI_n\vert_U$, but $x_1\notin\cI_Z^{2}\vert_U$, so $\cI_n\vert_U\not\subseteq \cI_Z^{2}\vert_U$: this is Condition~(\ref{charactwbu2}).

Conversely, assume now that Conditions (\ref{charactwbu1}) and (\ref{charactwbu2}) are satisfied. Let $U$ be an analytic open subset of $X$, meeting $Z$.
We may assume that $U$ is nonsingular. We have $f_\ast\cO_Y(-2E)\vert_U\subseteq \cI_Z\vert_U$.
During the tower construction, if there were a contribution of some divisor $E_{i+1}$ to the total transforms of the birational images of
the divisors $E_{i-1},\ldots,E_1$, that is, if there were an index $j\leq i$ such that $Z_j\subseteq E_{i+1}$, then at the end of the process,
the multiplicity of~$E_n$ in~$h_1^\ast E_1$ would be greater of equal than two (see Equation~\eqref{eq3}). This would mean that for any $\varphi\in\cI_Z\vert_U$, the function $\varphi\circ h$ vanishes at least twice along $E_n$, so:
\[
\cI_Z\vert_U\subset h_\ast\cO_{X_n}(-2E_n)\vert_U=f_\ast\cO_Y(-2E)\vert_U.
\]
This would contradict Condition~(\ref{charactwbu1}), so we deduce that there is no such contribution and we have the formulas:
\begin{equation}\label{eq2}
g_{i,j}^\ast E_j=\sum_{k=j}^i E_k+\text{other components},\quad 0\leq j<i\leq n.
\end{equation}
The sheaf $\left(\cI_Z/\cI_Z^{2}\right)\vert_U$ is locally free of rank~$3$ on~$U\cap Z$ and is locally generated by three coordinate functions of~$U$ (see~\cite[Theorem~8.17]{Hartshorne}). We have:
\[
h_*\cO_{X_n}(-nE_n)\vert_U\subseteq h_*\cO_{X_n}(-E_n)\vert_U\subseteq \cI_Z\vert_U,
\]
and by Condition~(\ref{charactwbu2}), we can choose an element $x_1\in\left(\cI_Z\setminus\cI_Z^{}\right)\vert_U$ such that $h_n^\ast\Div(x_1)\geq n E_n$. The function $x_1$ has multiplicity one along $Z\cap U$, this means that, shrinking $U$ if necessary, we may assume that $x_1$ is part of a coordinate system of $Z\cap U$.
At step $n$ of the tower construction, the process ends up when the centre $Z_n$ is the prime divisor $E_n$ in $X_n$.
We take a coordinate $x_3 \in  \left(\cI_Z/\cI_Z^{2}\right)\vert_U$ such that $x_3\circ h$ vanishes with multiplicity one along each $E_i$, shrinking again $U$ if necessary.
At step $m$ of the construction, the centre $Z_m$ has codimension at most two,
so, shrinking once more $U$ we choose a third coordinate function $x_2$ such that until step $m$, the local equations of $Z_i$ are the strict transforms of $x_1$ and $x_2$ and a local equation of $E_i$,
that is:
\begin{align*}
\sum\limits_{i=1}^n E_i&=\Div(x_3\circ h),\\
Z_i&=E_i\cap\Div(x_1)\cap\Div(x_2),\quad\forall i=1,\ldots,m-1,\\
Z_i&=E_i\cap\Div(x_1),\quad\forall i=m,\ldots,n-1,
\end{align*}
where, as explained above, we still denote by~$D$, instead of $(g_{i,0}^{-1})_\ast D$, the strict transform in~$X_j$ of a divisor~$D$ in~$X_0$. We finally take complementary coordinate functions $x_4,\ldots,x_d$ on $U$.
Denote by $v_{E_n}$ the valuation associated to the discrete valuation ring $\cO_{X_n,E_n}$. Locally on $U$, it is characterized by its value on the coordinate functions $x_1,\ldots,x_d$. Since $x_1$ vanishes along $Z_i$ for all $i=1,\ldots,n$, using Equation~\eqref{eq2} we have by induction:
\[
g_{i,0}^\ast \Div(x_1)=\Div(x_1)+\sum_{j=1}^i j E_j +\text{other components},\quad\forall i=1,\ldots,n.
\]
In particular, for $i=n$ we get $v_{E_n}(x_1)=n$. Similarly, since $x_2$ vanishes along~$Z_i$ only for $i=1,\ldots,m-1$, we compute:
\begin{align*}
g_{m,0}^\ast \Div(x_2)&=\Div(x_2)+\sum_{j=1}^m j E_j+\text{other components},\\
g_{m+1,0}^\ast \Div(x_2)&=\Div(x_2)+\sum_{j=1}^m j E_j+mE_{m+1}+\text{other components},\\
g_{n,0}^\ast \Div(x_2)&=\Div(x_2)+\sum_{j=1}^m j E_j+m(E_{m+1}+\cdots+E_n)+\text{other components}.
\end{align*}
This shows that $v_{E_n}(x_2)=m$. Clearly $v_{E_n}(x_3)=1$ and $v_{E_n}(x_i)=0$ for $i\geq 4$, so locally over $U$, $E_n$~defines the same valuation as a weighted blow-up with weights~$(n,m,1)$. Since the divisorial contraction~$f$ is $G$-equivariant, and since $Z$~is a $G$-orbit, a local analysis at another analytic neighbourhood of~$Z$ can be performed by translating the above local coordinate functions by the action of~$G$, so the weighted blow-up is given by the same weights everywhere and $f$~is a $G$-equivariant weighted blow-up with weights $(n,m,1)$.
\end{proof}

\begin{remark}\label{rem:no_accident}
If there is no contribution of $E_{i+1}$ to the total transform of the birational images of the divisors $E_{i-1},\ldots,E_1$
in Equation~\eqref{eq3}, for any $i$, then Equation~\eqref{eq:7} shows that the discrepancy function $i\mapsto\crep(E_i,X)$ increases by~$2$ until $i=m$ and by~$1$ after, so:
\[
a=\crep(E,X)=2m+(n-m)=n+m.
\]
This can be also directly computed from Equations~\eqref{eq:8} of a weighted blow-up.
\end{remark}


\section{Proof of Theorem~\ref{th:main}}\label{s:proof}

The proof of Theorem~\ref{th:main} uses an induction argument by considering a general hyperplane section of a locally $\IQ$-factorial variety. The next three lemmata contain the necessary ingredients for this induction. For any variety $X$, we denote by $\CL(X)$ the group of linear equivalence classes $[D]$ of Weil divisors $D$ on $X$.

\begin{lemma} \label{lem:Qfact}
Let $X$ be a normal, locally $\IQ$-factorial and projective variety of dimension at least four,
with terminal singularities. Let $L$ be a base point free line bundle on~$X$.
{\rm \begin{enumerate}
\item\label{lem:Qfact:1}  {\it A general element $H\in |L|$ is normal  with terminal singularities.}
\item\label{lem:Qfact:2} {\it If moreover $L$ is ample, then $H$ is also locally $\IQ$-factorial.}
\end{enumerate}}
\end{lemma}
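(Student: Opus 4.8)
The plan is to treat part~(\ref{lem:Qfact:1}) by verifying Serre's criterion together with a discrepancy computation, and part~(\ref{lem:Qfact:2}) by invoking a Lefschetz-type comparison of divisor class groups. First I would establish normality of a general $H\in|L|$. Since $L$ is base point free, Bertini's theorem guarantees that $H$ is smooth away from $X^\sing$. As $X$ is terminal, its singular locus has codimension at least three, so $\dim X^\sing\leq\dim X-3$; hence $H^\sing$ is contained in $X^\sing\cap H$ and has codimension at least two in $H$, which gives condition $R_1$. On the other hand, terminal singularities are rational, hence Cohen--Macaulay, and $H$ is a Cartier divisor in the Cohen--Macaulay variety $X$, so $H$ is itself Cohen--Macaulay and satisfies $S_2$. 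Serre's criterion then yields normality of $H$.

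Next I would show that $H$ is terminal. Fix a log resolution $\pi\colon\widetilde X\to X$ with $K_{\widetilde X}\sim_\IQ\pi^*K_X+\sum_E a_E E$ and all $a_E>0$. Because $\pi^*L$ is base point free on the smooth variety $\widetilde X$, for a general $H$ the pullback $\overline H\coloneqq\pi^*H$ is smooth and coincides with the strict transform of $H$, so $\rho\coloneqq\pi|_{\overline H}\colon\overline H\to H$ is a resolution whose exceptional divisors are exactly the components of the divisors $E\cap\overline H$. Adjunction then gives
\[
K_{\overline H}\sim_\IQ(K_{\widetilde X}+\overline H)|_{\overline H}\sim_\IQ\rho^*K_H+\sum_E a_E\,(E\cap\overline H),
\]
where $K_H\sim_\IQ(K_X+H)|_H$ is $\IQ$-Cartier since $K_X$ is $\IQ$-Cartier and $H$ is Cartier. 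As every $a_E>0$, all discrepancies of $H$ are positive and $H$ is terminal.

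For part~(\ref{lem:Qfact:2}) I would add the hypothesis that $L$ is ample and appeal to a Grothendieck--Lefschetz-type theorem for class groups: when $\dim X\geq 4$ and $L$ is sufficiently ample, the restriction homomorphism $\CL(X)\to\CL(H)$, $[D]\mapsto[D|_H]$, is an isomorphism for general $H\in|L|$ (this is the content of the theorem of Ravindra and Srinivas). Granting this, I would take any Weil divisor $D$ on $H$ and lift its class to a Weil divisor $D'$ on $X$, so that $D\sim D'|_H$. Since $X$ is locally $\IQ$-factorial, some multiple $mD'$ is Cartier; restricting to $H$ (which for general $H$ is not contained in the support of $D'$) shows that $(mD')|_H\sim mD$ is Cartier on $H$. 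Hence $mD$ is Cartier, $D$ is $\IQ$-Cartier, and $H$ is locally $\IQ$-factorial.

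The routine part is~(\ref{lem:Qfact:1}); the genuine difficulty lies in~(\ref{lem:Qfact:2}). The hard part will be the comparison of class groups under restriction to an ample divisor: the classical Grothendieck--Lefschetz theorem controls only Picard groups, whereas here one must compare the full groups of Weil divisor classes of the \emph{singular} varieties $X$ and $H$. This comparison fails in dimension three (a Noether--Lefschetz phenomenon), which is exactly why the hypothesis $\dim X\geq 4$ is indispensable; one must also check that the positivity of $L$ is strong enough for the cited theorem to apply and use the normality of $H$ established in~(\ref{lem:Qfact:1}) so that $\CL(H)$ is well behaved.
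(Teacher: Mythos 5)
Your argument is correct, and part~(2) is exactly the paper's proof: both invoke the Grothendieck--Lefschetz theorem for class groups of normal varieties due to Ravindra--Srinivas to get $\CL(X)\xrightarrow{\sim}\CL(H)$ and then restrict a Cartier multiple of a lifted Weil divisor (note that \cite[Theorem~1]{RS} needs only ``ample and base point free'', which is precisely the hypothesis, so your worry about ``sufficiently ample'' is unnecessary). For part~(1) you take a genuinely different route. The paper simply cites the Seidenberg theorem and the generalized Seidenberg theorem of Beltrametti--Sommese \cite[Theorem~1.7.1]{BS}, which package normality and the persistence of terminal singularities for general members of base point free systems; you instead verify everything by hand, getting $R_1$ from Bertini plus the codimension-$\geq 3$ singular locus of a terminal variety, $S_2$ from the chain terminal $\Rightarrow$ rational $\Rightarrow$ Cohen--Macaulay together with $H$ being Cartier, and terminality by cutting a log resolution $\pi\colon\widetilde X\to X$ with the general member $\overline H=\pi^*H=\pi_*^{-1}H$ and reading off the discrepancies via adjunction. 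This is more self-contained and makes visible \emph{why} the discrepancies are preserved, at the cost of a few routine verifications you correctly identify (that $\overline H$ is smooth, equals the strict transform, and that $\operatorname{Exc}(\pi|_{\overline H})$ is the general section of the SNC divisor $\operatorname{Exc}(\pi)$, hence again SNC, so $\pi|_{\overline H}$ is a log resolution of $H$ against which the paper's definition of terminal can be checked).

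One small omission: you never address irreducibility (equivalently connectedness) of the general $H$, which is implicit in calling $H$ a terminal \emph{variety} and is needed downstream. The paper handles this by citing Jouanolou's Bertini theorem for irreducibility; this is where one genuinely uses that the morphism defined by $|L|$ has image of dimension at least two (true in all the applications, where $L$ is ample or a birational pullback of an ample bundle), since for a base point free pencil with disconnected Stein factorization the general member can be reducible. Adding one sentence quoting Jouanolou would close this.
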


\begin{proof}

By the Bertini theorem for irreductibility~\cite{Jouanolou}, the Seidenberg theorem~\cite{Seidenberg}  and the generalized Seidenberg theorem~\cite[Theorem~1.7.1]{BS} and its proof, a general element $H\in|L|$ is an irreducible Cartier divisor of $X$,  which is normal with terminal singularities in codimension at least three, proving~\eqref{lem:Qfact:1}.
As for \eqref{lem:Qfact:2}, by the Grothendieck--Lefschetz theorem for normal varieties, proven by Ravindra--Srinivas \cite[Theorem~1]{RS}, for $H$ general the restriction map $\CL(X)\to\CL(H)$ defined by $[D]\mapsto [D\cap H]$ is an isomorphism, so $H$ is locally $\IQ$-factorial.
\end{proof}

The following lemma is a consequence of \cite[Theorem~2]{RS}.

\begin{lemma}\label{lem:RS}
Let~$W$ be a normal, irreducible, projective variety of dimension greater or equal to four, $L$~a big line bundle on~$W$ and $V\subseteq \HH^0(W,L)$ linear subspace giving a
base point free linear system~$|V|$ on~$W$. Let $\varphi\colon W\to \IP(V^\ast)$ be the morphism determined by~$V$ and consider the Stein factorization of $\varphi$:
\[
\varphi\colon W\overset{\pi}{\longrightarrow}W'\longrightarrow \IP(V^\ast).
\]
Then there exists a Zariski open subset of divisors $Z \in |V|$ such that the cokernel of the restriction map:
\[
\rho\colon \CL(W)\to \CL(Z),
\]
is generated by the classes of $\mathcal O_Z(E)$, where $E$ is supported on $\Exc(\pi)\cap Z$.
\end{lemma}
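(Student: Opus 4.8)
The plan is to put ourselves in the situation covered by the Grothendieck--Lefschetz theorem of Ravindra--Srinivas \cite[Theorem~2]{RS}, the only real point being to use the bigness of $L$ to control the Stein factorisation. Since $|V|$ is base point free we have $L=\varphi^\ast\cO_{\IP(V^\ast)}(1)$, and since $L$ is big we get $\dim\varphi(W)=\dim W$, so $\varphi$ is generically finite. In the Stein factorisation $\varphi\colon W\xrightarrow{\pi}W'\to\IP(V^\ast)$ the variety $W'$ is normal, the second arrow is finite, and $\pi$ has connected fibres; being generically finite with connected fibres between normal varieties, $\pi$ is a birational morphism, and as $W'$ is normal its exceptional locus $\Exc(\pi)$ maps into a closed subset of codimension at least two in $W'$. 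Moreover $L$ descends to an ample line bundle $L'$ on $W'$ with $\pi^\ast L'=L$ and $\dim W'=\dim W\geq 4$, and $V$ maps to a subspace $V'\subseteq\HH^0(W',L')$ defining the finite map $W'\to\IP(V^\ast)$.

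I would then apply the theorem on $W'$, where $L'$ is ample: for $Z'\in|V'|$ general the restriction $\CL(W')\to\CL(Z')$ is an isomorphism, while $Z\coloneqq\pi^\ast Z'$ is a general member of $|V|$, and by Bertini (in the spirit of Lemma~\ref{lem:Qfact}) both $Z'$ and $Z$ are normal and irreducible with $\pi|_Z\colon Z\to Z'$ birational and $\Exc(\pi|_Z)=\Exc(\pi)\cap Z$. The conclusion then follows by a diagram chase on the square built from $\pi^\ast\colon\CL(W')\to\CL(W)$, the restriction $\rho\colon\CL(W)\to\CL(Z)$, the pullback $(\pi|_Z)^\ast\colon\CL(Z')\to\CL(Z)$ and the restriction $\CL(W')\to\CL(Z')$: for a birational morphism of normal varieties the cokernel of the pullback on class groups is generated by the classes of its prime exceptional divisors, so applying this to $\pi$ and to $\pi|_Z$ and feeding in the isomorphism on the $W'$-level shows that $\mathrm{coker}(\rho)$ is generated by the classes of the components of $\Exc(\pi)\cap Z$.

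I expect the diagram chase to be routine; the genuine obstacle, and the part where \cite[Theorem~2]{RS} does the work, is the Bertini bookkeeping that makes the general $Z$ normal and irreducible and matches the prime exceptional divisors of $\pi$ with those of $\pi|_Z$ under restriction, so that no exceptional component is lost, gained, or split, and the generators of $\mathrm{coker}(\rho)$ are exactly the classes supported on $\Exc(\pi)\cap Z$. The reduction to the ample case on $W'$ via the Stein factorisation, rather than a direct appeal, is what lets us quote the theorem cleanly despite $V$ being only a subspace of $\HH^0(W,L)$.
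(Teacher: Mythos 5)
Your proof is correct in substance but takes a genuinely different route from the paper's. The paper goes \emph{up}: it takes a resolution $\varepsilon\colon\widehat W\to W$ by blow-ups whose centres avoid the strict transform of a general $Z$, applies \cite[Theorem~2]{RS} (the big and base-point-free version) on the smooth model, and then pushes the resulting decomposition $[\widehat F]=[\widehat D\vert_{\widehat Z}]+\sum_i m_i[F_i]$ forward by $\varepsilon_\ast$, the $\varepsilon$-exceptional contributions either dying or landing in $\Exc(\pi)\cap Z$. You go \emph{down}: you descend to the Stein factorization $W'$, where the line bundle becomes ample, invoke the isomorphism version \cite[Theorem~1]{RS} on the normal variety $W'$ of dimension at least four, and then compare $\CL(W)$ with $\CL(W')$ (and $\CL(Z)$ with $\CL(Z')$) via the exceptional divisors of $\pi$. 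Your route has the appeal of using only the cleaner Theorem~1, which the paper already invokes in Lemma~\ref{lem:Qfact}; the price is the Bertini and Stein bookkeeping on the possibly singular $W'$ and the class-group comparison for $\pi$ and $\pi\vert_Z$. One point in the latter must be repaired: there is no pullback homomorphism $\pi^\ast\colon\CL(W')\to\CL(W)$ for a birational morphism of normal varieties (Weil classes that are not $\IQ$-Cartier do not pull back, and strict transform is well defined on classes only modulo the subgroup generated by exceptional divisors), so the commutative square should be built from the pushforwards $\pi_\ast$ and $(\pi\vert_Z)_\ast$ instead. These are surjective with kernels generated by the classes of the contracted prime divisors, because $\pi(\Exc(\pi))$ has codimension at least two in the normal variety $W'$ and, for $Z$ general, $\pi(\Exc(\pi))\cap Z'$ has codimension at least two in $Z'$. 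The chase then reads: given $[F]\in\CL(Z)$, the class $(\pi\vert_Z)_\ast[F]\in\CL(Z')\cong\CL(W')$ lifts through the surjection $\pi_\ast$ to some $[D]\in\CL(W)$, and $[F]-[D\vert_Z]$ lies in $\ker(\pi\vert_Z)_\ast$, hence is a combination of classes of prime divisors contained in $\Exc(\pi)\cap Z$. With that adjustment, and the (routine but necessary) check that restriction to a general $Z$ commutes with these pushforwards, your argument is complete.
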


\begin{proof}
Let $\varepsilon\colon \widehat W\to W$ be a resolution of singularities given by a sequence of blow-ups $\varepsilon=\varepsilon_s\circ\cdots\circ\varepsilon_1$. Since $Z$ is general, the morphism $\varepsilon$ induces a resolution of singularities $\varepsilon\vert_{\widehat Z}\colon \widehat Z\to Z$ such that no centre of a blow-up $\varepsilon_i$ is contained in the strict transform of $Z$.
Let $[F]\in \CL(Z)$ and let $\widehat F$ be its strict transform in~$\widehat Z$.
By \cite[Theorem~2]{RS} there are a divisor $\widehat D$ on $\widehat W$ and divisors $F_i$ contained in $\Exc(\pi\circ\varepsilon\vert_{\widehat Z})$ such that:
\[
[\widehat F]=[\widehat D\vert_{\widehat Z}]+\sum_i m_i[F_i].
\]
Then $[F]=\varepsilon_*[\widehat F]=\varepsilon_*[\widehat D]\vert_Z+\sum_i m_i \varepsilon_*[F_i]$, so the cokernel of the restriction map~$\rho$ is generated by the classes of exceptional divisors of~$\pi$.
\end{proof}

\begin{lemma}\label{lem:normal} Let $f\colon Y\to X$ be a $G$-equivariant divisorial contraction to a smooth $G$-simply connected $G$-orbit $Z$. Then general fibers of $f$ over $Z$ are irreducible.
\end{lemma}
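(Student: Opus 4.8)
The plan is to analyse the fibres through the Stein factorization of the restricted morphism $f_{|E}\colon E\to Z$, and to leverage the two hypotheses on $Z$ -- that it is a single $G$-orbit and that it is $G$-simply connected -- to force this factorization to be trivial on its finite part.

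First I would reduce the statement to $f_{|E}$. Since $f$ is birational with prime exceptional divisor $E=\Exc(f)$ and $f(E)=Z$, it restricts to an isomorphism $Y\setminus E\xrightarrow{\sim}X\setminus Z$; consequently, for every $z\in Z$ the fibre $f^{-1}(z)$ coincides with $(f_{|E})^{-1}(z)$, so it is enough to show that the general fibre of $f_{|E}$ is irreducible. I note that $E$ is a prime divisor, hence integral, and that $E$ and $Z$ are $G$-invariant since $G$ is connected, so $f_{|E}$ is a $G$-equivariant morphism of $G$-varieties.

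Next I would form the Stein factorization
\[
f_{|E}\colon E\overset{\pi}{\longrightarrow}Z'\overset{\eta}{\longrightarrow}Z,
\]
with $\pi_\ast\cO_E=\cO_{Z'}$ and $\eta$ finite. As the Stein factorization is canonical, the $G$-actions on $E$ and $Z$ induce one on $Z'$ for which both $\pi$ and $\eta$ are equivariant. Because $E$ is integral and $\pi$ is surjective with $\pi_\ast\cO_E=\cO_{Z'}$, the variety $Z'$ is integral, in particular connected. Now $\eta$ is a finite $G$-equivariant morphism onto the $G$-orbit $Z$, on which $G$ acts transitively, hence it is étale by the remark in \S\ref{ss:Gdiv}; being moreover connected, étale and $G$-equivariant over the $G$-simply connected variety $Z$, it must be an isomorphism by the very definition of $G$-simple connectedness.

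The final step is to deduce irreducibility of the general fibre from the triviality $Z'\cong Z$. This identification says that $(f_{|E})_\ast\cO_E=\cO_Z$, equivalently that $\IC(Z)$ is algebraically closed in $\IC(E)$; over $\IC$ this makes the generic fibre of $f_{|E}$ geometrically integral, and by the constructibility of the locus over which fibres are geometrically integral (EGA IV, 9.7.7) there is a dense open subset of $Z$ over which every fibre is irreducible. The hard part is precisely this passage: the $G$-simply connected hypothesis only guarantees that $f_{|E}$ has connected fibres, i.e. a trivial finite part in the Stein factorization, and one must upgrade connectedness of the general fibre to genuine irreducibility through generic geometric integrality rather than by a fibrewise argument.
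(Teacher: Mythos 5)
There is a genuine gap in your final step, and it is precisely the point the paper's proof is designed to get around. You take the Stein factorization of $f_{|E}\colon E\to Z$ itself and assert that the triviality of its finite part, i.e.\ $(f_{|E})_*\cO_E=\cO_Z$, is \emph{equivalent} to $\IC(Z)$ being algebraically closed in $\IC(E)$. Only one direction of this is true. When $E$ is not normal --- and nothing in the setup guarantees that the exceptional divisor of a divisorial contraction is normal --- the algebraic closure of $\IC(Z)$ in $\IC(E)$ consists of rational functions that need not be regular along the generic fibre, so it can be strictly larger than the generic stalk of $(f_{|E})_*\cO_E$. A concrete model: for $E=\{y^2=tx^2\}\subset\IP^2\times\IA^1_t$ over $Z=\IA^1_t$, the total space is integral, every fibre over $t\neq 0$ is connected with $H^0=\IC$ (two lines meeting at a point), so the Stein factorization of $E\to Z$ is trivial; yet $u=y/x$ satisfies $u^2=t$, so $\IC(t)$ is not algebraically closed in $\IC(E)$ and the general fibre is reducible. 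In such a situation your argument produces a trivial $Z'$, the \'etale/$G$-simply-connected step becomes vacuous, and the conclusion does not follow. The hypothesis $\pi_1^G(Z)=1$ has to be applied to the finite cover that actually records the splitting of the fibres into components, namely the normalization of $Z$ in the algebraic closure of $\IC(Z)$ in $\IC(E)$, and the Stein factorization of the non-normal $E$ does not see it.

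The paper's proof fixes exactly this: it first passes to the normalization $\nu\colon\widetilde E\to E$ (which is $G$-equivariant) and takes the Stein factorization of $f_{|E}\circ\nu\colon\widetilde E\to Z'\to Z$. Because $\widetilde E$ is normal, the finite part $\eta\colon Z'\to Z$ now does compute the relevant algebraic closure; it is \'etale by transitivity of the $G$-action and an isomorphism by $G$-simple connectedness, so $\widetilde E\to Z$ has connected fibres. The paper then concludes not via EGA~IV~9.7.7 but via the generalized Seidenberg theorem: a general fibre of the fibration from the \emph{normal} variety $\widetilde E$ is normal, hence irreducible once connected, and this pushes down along the finite birational $\nu$. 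Your closing step via generic geometric integrality of $E_\eta$ would also work once you know $\IC(Z)$ is algebraically closed in $\IC(E)$, so the repair is local: insert the normalization before the Stein factorization, and run your \'etale argument on the resulting $\eta$. As written, though, the key equivalence you rely on is false in the direction you need.
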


\begin{proof}
Denote by $\nu\colon \widetilde E\to E$ the normalization of $E$. The morphism $\nu$ is birational and finite and the action of $G$ on~$E$ naturally extends to $\widetilde E$, so $\nu$ is $G$-equivariant.
Consider the restricted morphism $f_{|E}\colon E\to Z$ and the Stein factorization of the morphism $f_{|E}\circ\nu\colon \widetilde E\to Z$:
\[
\xymatrix{\widetilde E\ar[rr]^{f_{|E}\circ\nu}\ar[rd]^{\nu'} && Z\\
& Z'\ar[ur]^\eta}
\]
By construction, the fibration $\nu'$  and the finite morphism~$\eta$ are  $G$-equivariant.
Since $\eta$~is finite, surjective and $G$-equivariant, it is \'etale, otherwise it would be ramified everywhere since $G$ acts transitively on $Z$.
Since $Z$ is $G$-simply connected, $\eta$~is an isomorphism, so $f_{|E}\circ\nu=\nu'$ has connected fibers. Since $\widetilde E$ is normal, by the generalized Seidenberg theorem~\cite[Theorem~1.7.1]{BS},
a general fiber of $f_{|E}\circ\nu=\nu'$ is normal, hence irreducible since it is connected. It follows that a general fiber of $f_{|E}$ is irreducible.
\end{proof}

\begin{proof}[Proof of Theorem~\ref{th:main}]
Let $f\colon Y\to X$ be a $G$-equivariant divisorial contraction. We assume that its exceptional prime divisor is contracted to a smooth $(d-3)$-dimensional centre~$Z$ contained in the smooth locus of $X$, and that
$Z$ is a $G$-simply connected $G$-orbit. If $d=3$, the result follows from Kawakita's theorem (see Theorem~\ref{th:kawakita}) so we assume that $d\geq 4$.
By Lemma \ref{lem:Qfact}(\ref{lem:Qfact:1}), a general complete intersection of $(d-3)$ hyperplanes of $X$, that we denote by:
\[
H\coloneqq H_1\cap\cdots\cap H_{d-3}
\]
 is such that $H$~and $\widetilde H\coloneqq f^{-1}(H)$ are reduced,
irreducible, normal with terminal singularities.
Moreover, by Lemma \ref{lem:Qfact}(\ref{lem:Qfact:2}) the variety~$H$ is locally $\IQ$-factorial.
We claim:
\begin{claim}\label{claim:Qfact}
The variety~$\widetilde H$ is locally $\IQ$-factorial.
\end{claim}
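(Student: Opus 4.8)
The strategy is to control the class group $\CL(\widetilde H)$ by comparing it with $\CL(Y)$, which is well understood because $Y$ is locally $\IQ$-factorial. Once we know that every class in $\CL(\widetilde H)$ is either the restriction of a class on $Y$ or supported on the exceptional set $E\cap\widetilde H$, local $\IQ$-factoriality of $\widetilde H$ will follow: the restriction of a $\IQ$-Cartier divisor to the Cartier divisor $\widetilde H$ is again $\IQ$-Cartier, and we will check separately that the exceptional classes are $\IQ$-Cartier. The reason we cannot simply invoke Lemma~\ref{lem:Qfact}(\ref{lem:Qfact:2}) is that $\widetilde H=f^{-1}(H)$ is cut out by the pullbacks $f^\ast H_1,\dots,f^\ast H_{d-3}$ of the hyperplanes, and $f^\ast\cO_X(1)$ is big and base point free but not ample (it is trivial on the curves contracted by $f$); this is exactly the situation covered by the Ravindra--Srinivas result in Lemma~\ref{lem:RS}.

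The plan is to cut one hyperplane at a time. Set $Y_0\coloneqq Y$ and $Y_k\coloneqq f^{-1}(H_1\cap\cdots\cap H_k)$, so that $Y_{d-3}=\widetilde H$; each $Y_k$ is normal, irreducible and projective, and $Y_{k-1}$ has dimension $d-k+1\geq 4$ for $1\le k\le d-3$. At the $k$-th step I apply Lemma~\ref{lem:RS} to $W=Y_{k-1}$, with the big and base point free line bundle $f^\ast\cO_X(1)\vert_{Y_{k-1}}$ and $V$ the image of its global sections. The associated morphism factors as $Y_{k-1}\xrightarrow{f_{k-1}}H_1\cap\cdots\cap H_{k-1}\hookrightarrow\IP(V^\ast)$; since $f_{k-1}$ is the restriction of $f$, the fibre of $f_{k-1}$ over a point coincides with the fibre of $f$, hence $f_{k-1}$ is birational with connected fibres and is exactly the Stein factorisation $\pi$, so $\Exc(\pi)=E\cap Y_{k-1}$. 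Lemma~\ref{lem:RS} then gives that the cokernel of the restriction $\CL(Y_{k-1})\to\CL(Y_k)$ is generated by classes supported on $\Exc(\pi)\cap Y_k=E\cap Y_k$. Feeding this into a straightforward induction (restricting the exceptional classes of each stage further down to $\widetilde H$, where they remain supported on $E\cap\widetilde H$), I obtain that $\CL(\widetilde H)$ is generated by the image of $\CL(Y)$ together with the classes supported on $E\cap\widetilde H$.

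It remains to check that the latter classes are $\IQ$-Cartier. Since $f(E)=Z$, we have $E\cap\widetilde H=f^{-1}(Z\cap H)$, and because $\dim Z=d-3$ while $H$ is a general intersection of $d-3$ hyperplanes, the locus $Z\cap H$ is a finite set of general points $p_1,\dots,p_\delta$ of $Z$. Thus $E\cap\widetilde H=\bigsqcup_{i=1}^\delta F_i$ with $F_i\coloneqq f^{-1}(p_i)$, and these sets are pairwise disjoint because they lie over distinct points; moreover each $F_i$ is irreducible by Lemma~\ref{lem:normal}, so the prime components of $E\cap\widetilde H$ are exactly the $F_i$. Now $E$ is $\IQ$-Cartier on $Y$, hence its restriction $E\vert_{\widetilde H}$ is $\IQ$-Cartier on $\widetilde H$; since the $F_i$ are disjoint, near the support of each $F_i$ the divisor $E\vert_{\widetilde H}$ equals $F_i$, so each $F_i$ is $\IQ$-Cartier. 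Combining with the fact that the image of $\CL(Y)$ consists of $\IQ$-Cartier classes, every class in $\CL(\widetilde H)$ is $\IQ$-Cartier, so $\widetilde H$ is locally $\IQ$-factorial.

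The main obstacle I anticipate is precisely that $E\cap\widetilde H$ need not be irreducible: it breaks up into one piece over each point of $Z\cap H$, so the restriction map $\CL(Y)\to\CL(\widetilde H)$ is in general not surjective and a naive class-group comparison fails. The point that rescues the argument is that these pieces lie over distinct points and are therefore disjoint, which makes each of them locally coincide with the $\IQ$-Cartier divisor $E\vert_{\widetilde H}$; this in turn relies on the $G$-orbit hypothesis through Lemma~\ref{lem:normal}, which guarantees that each piece $F_i$ is a single prime divisor rather than a reducible fibre. Correctly identifying the Stein factorisation so that $\Exc(\pi)=E$, via the connectedness of the fibres of $f$, is the other technical point to get right.
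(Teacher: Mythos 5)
Your proposal is correct and follows essentially the same route as the paper: an induction cutting one hyperplane at a time, applying Lemma~\ref{lem:RS} to the big and base point free restriction of $f^\ast\cO_X(1)$ with the Stein factorisation identified as $f$ itself, and then using Lemma~\ref{lem:normal} to see that the irreducible components of $E\cap\widetilde H$ are its connected components, hence each locally coincides with the $\IQ$-Cartier divisor $E\vert_{\widetilde H}$. The only (harmless) difference is that the paper proves local $\IQ$-factoriality of each intermediate section $\widetilde H_1\cap\cdots\cap\widetilde H_k$ along the way, whereas you push all exceptional generators down to the final section $\widetilde H$ before checking they are $\IQ$-Cartier.
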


Assuming the claim, we finish the proof.
Since the centre $Z$ of $f$ has codimension three in $X$, for $H$ general the intersection $H\cap Z$ is reduced and
consists of a finite number of points, so the restricted morphism
$f_{\mid\widetilde H}\colon \widetilde{H}\to H$ is birational and its exceptional locus is~$E\cap\widetilde H$. By Lemma~\ref{lem:normal}, the fibers of $f_{|\widetilde H}$ over $Z$ are irreducible.
Notice that $f^*H_k=\widetilde H_k$ for all $k$ where $\widetilde H_k$ is the strict transform of $H_k$ in $Y$.

By the adjunction formula we have:
\[
K_{\widetilde H}\equiv\left.\left(K_Y+\widetilde H_1+\cdots+\widetilde H_{d-3}\right)\right|_{\widetilde H}\equiv \left.\left(K_Y+f^* H_1+\cdots+f^*H_{d-3}\right)\right|_{\widetilde H}.
\]
Since $K_Y$ is $f$-antiample by assumption, we deduce that $K_{\widetilde H}$
is $f_{|\widetilde H}$-antiample.

To show that $f_{|\widetilde H}$ is a divisorial contraction over every point of $H\cap Z$,  we run on $\widetilde H$ a $K_{\widetilde H}$-MMP with scaling of an ample divisor over $H$. This MMP terminates by Lemma~\ref{easyterm} applied to~$(\widetilde H, 0)$:
\[
\xymatrix{\widetilde H \eqqcolon H_0\ar@{-->}[r]^{\varphi_0} \ar[drr]_{f_{|\widetilde H}\eqqcolon f_0} & H_1
\ar@{-->}[r]^{\varphi_1} \ar[dr]^{f_1}& \cdots\ar@{-->}[r]^{\varphi_{p-2}}& H_{p-1}\ar@{-->}[r]^{\varphi_{p-1}}\ar[dl]_{f_{p-1}} & H_p\ar[dll]^{f_p}\\
& & H}
\]

In this diagram, each dotted arrow $\varphi_i$ is either a divisorial contraction or a flip.
Since $f_0$ is birational, all structural morphisms $f_i$ are birational too.
The variety~$H_p$ is a minimal model relatively to~$H$, so it is normal, locally $\IQ$-factorial, with terminal singularities, and its canonical divisor is $f_p$-nef. Assume that $f_p$ is not an isomorphism. Since  $H$ has terminal singularities, there exist effective divisors~$F_i$ and positive rational numbers~$b_i$ such that:
\[
K_{H_p}\equiv f_p^\ast K_H+\sum_i b_i F_i.
\]
putting $D\coloneqq \sum_i b_i F_i$, we see that $D$~is $f_p$-nef, so by the negativity lemma~\cite[Lemma~3.39]{KollarMori}, $(-D)$~is effective, this is a contradiction. So $f_p$~is an isomorphism  and we may assume that the last step of the MMP is:
\[
\xymatrix{H_0\ar@{-->}[r]^{\varphi_0} \ar[drr]_{f_0} & \cdots \ar@{-->}[r]^{\varphi_{p-2}}
& H_{p-1}\ar@{-->}[r]^{\varphi_{p-1}}\ar[d]_{f_{p-1}} & H\ar@{=}[dl]\\
& & H}
\]

It follows that the birational map $\varphi_{p-1}$ is everywhere defined, so it is a divisorial contraction, with exceptional divisor denoted $D_{p-1}\subset H_{p-1}$.
Its strict transform $\widetilde D_{p-1}\subset \widetilde H$ is an irreducible component of the exceptional locus $E\cap\widetilde H$, so by Lemma~\ref{lem:normal} it is one of its connected components.
This means that, locally over~$H$ in a neighbourhood of $\varphi_{p-1}(D_{p-1})$ for the Zariski topology, the morphism~$f_{|\widetilde H}$ looks like a divisorial contraction.
More precisely, let $j$~be the maximum integer such that the exceptional locus of~$\varphi_j^{-1}$ is not disjoint from~$D_{p-1}$.
Then $\varphi_j$~cannot be a flip, otherwise $D_{p-1}$~would contain a $K_{H_{p-1}}$-positive curve, which is impossible since $K_{H_{p-1}}$~is $f_{p-1}$-negative.
Moreover $\varphi_j$~cannot be a divisorial contraction, otherwise there would be at least two irreducible components in $f_{|\widetilde H}^{-1}(\varphi_{p-1}(D_{p-1}))$.
Therefore the exceptional locus of~$\varphi_j^{-1}$ is disjoint from~$D_{p-1}$ for all~$j\leq p-2$.
In particular, $f_{p-1}$~commutes with~$\varphi_j$ for all~$j$ and we
can repeat the argument to show that all~$\varphi_i$ are divisorial contractions, whose exceptional divisors are the connected components of $E\cap\widetilde H$.
This shows that $f_{|\widetilde H}\colon\widetilde H\to H$ is a composition of divisorial contractions with disjoint exceptional loci.

To conclude, we prove that $f$ is a $G$-equivariant weighted blow-up by proving that it satisfies Conditions (\ref{charactwbu1})~and~(\ref{charactwbu2}) of Proposition~\ref{charactwbu}.
Indeed, assume by contradiction that (\ref{charactwbu1})~or~(\ref{charactwbu2}) is not satisfied. Then there is a point $p\in Z$ such that:
\begin{align}
\text{either } f_*\cO_Y(-2E)\otimes \cO_p&= \cI_Z\otimes \cO_p, \label{eq5}\\
\text{or }
 f_*\cO_Y(-nE)\otimes \cO_p&\subseteq \cI_Z^{2}\otimes \cO_p.\label{eq6}
\end{align}
Translating $H$ using the action of $G$ if necessary, we may assume that $p\in H$. Then locally around $p$, by Theorem~\ref{th:kawakita}, the divisorial contraction $f\vert_{\widetilde H}$ is a weighted blow-up.
So by Proposition \ref{charactwbu} applied to $f\vert_{\widetilde H}$, we have:
\begin{align*}
\left(f\vert_{\widetilde H}\right)_\ast\cO_{\widetilde H}\left(-2(E\cap \widetilde H)\right)&\neq \cI_p,\\
 \text{and }
\left(f\vert_{\widetilde H}\right)_\ast\cO_{\widetilde H}\left(-n(E\cap \widetilde H)\right)&\not\subseteq \cI_p^{2}.
\end{align*}
Tensoring Formulas~\eqref{eq5} and~\eqref{eq6} by $\cO_H$, we get:
\begin{align*}
\text{either }\left(f\vert_{\widetilde H}\right)_\ast\cO_{\widetilde H}\left(-2(E\cap \widetilde H)\right)&= \cI_Z\otimes \cO_p\otimes\cO_H,\\ \text{ or }
 \left(f\vert_{\widetilde H}\right)_\ast\cO_{\widetilde H}\left(-n(E\cap \widetilde H)\right)&\subseteq \cI_Z^{2}\otimes \cO_p\otimes\cO_H.
\end{align*}
But by generality of $H$, we have $\cI_Z\otimes \cO_p\otimes\cO_H=\cI_p$: this gives a contradiction, so $f$ is a $G$-equivariant weighted blow-up.
We are left with the proof of Claim \ref{claim:Qfact}. We prove by induction on $k$ that
$\widetilde H_1\cap\cdots\cap\widetilde H_k$ is locally $\IQ$-factorial for any $k\geq 0$. For $k=0$ there is nothing to prove, so assume that $k>0$. The divisor:
\[
L\coloneqq\widetilde H_k\vert_{\widetilde H_1\cap\cdots\cap \widetilde H_{k-1}}
\]
is big and base point free.
We apply Lemma~\ref{lem:RS} with:
\begin{align*}
W&\coloneqq\widetilde H_1\cap\cdots\cap \widetilde H_{k-1},&
|V|&\coloneqq |L|,\\
Z&\coloneqq\widetilde H_1\cap\cdots\cap\widetilde H_k,&
\pi&=f\vert_{W}\colon W\to f(W).
\end{align*}
We get that for any $[D]\in \CL(Z)$, either $[D]$~is in the image of the restriction map $\rho\colon\CL(W)\to\CL(Z)$, and therefore is $\IQ$-Cartier by induction, or, modulo an element of the
image of the restriction map, we can assume that $D$~is supported on the exceptional locus of the restriction of $\pi$.
So we have to prove that the irreducible components of the exceptional locus of $\pi$ are $\IQ$-Cartier divisors.
Denote by $(E_i)_{i=1,\ldots,s}$ the irreducible components of
$E\cap \widetilde Z$.
By Lemma \ref{lem:normal}, these are also the connected components of $E\cap Z$ (the only case where the exceptional locus is not connected in general is for $k=d-3$). We thus have a decomposition
into  connected irreducible components:
\[
E\cap Z=E\cap \widetilde H_1\cap\cdots\cap\widetilde H_k=E_1\cup\cdots\cup E_s.
\]
Then each $E_i$ is $\IQ$-Cartier, since it is the intersection of the $\IQ$-Cartier divisor~$E$ with:
\[
Z\setminus (E_1\cup\cdots \cup E_{i-1}\cup E_{i+1}\cup\cdots\cup E_s).
\]
\end{proof}

\begin{remark} \textit{A posteriori}, our induction argument, reducing to Kawakita's theorem~\ref{th:kawakita} shows that the arithmetic between the integer parameters in issue still holds in the setup of Theorem~\ref{th:main}:
\begin{enumerate}
\item The integers $n$ and $m$ are coprime (see~\cite[Theorem~3.5]{Kawakita});
\item The integers $a$ is prime to the index of $K_Y$ (see~\cite[Lemma~4.3]{Kawakita});
\item $a\geq 2$ (see~\cite[\S 5, p.116]{Kawakita}).
\end{enumerate}
\end{remark}

\section{A counter-example}
\label{s:counterexample}

In this section, to emphasize the importance of the group action in the statement of Theorem~\ref{th:main}, we construct a non equivariant $4$-dimensional divisorial contraction which is not a weighted blow-up. We use similar notation as in the tower construction in Section~\ref{s:tower}.

Let $X_0 \coloneqq X$ be a smooth projective variety of dimension $4$ and $Z_0 \coloneqq Z \subset X$ be a smooth curve.
\begin{enumerate}
\item We denote by  $g_1\colon X_1\to X$  the blow-up of $Z$, with exceptional divisor~$E_1$.
Let $Z_1 \subset X_1$ be a section of the $\IP^2$-bundle $\left.g_1\right|_{E_1} \colon E_1 \to Z$.

\item We denote by $g_2\colon X_2\to X_1$ the blow-up of $Z_1$, with exceptional divisor~$E_2$.
Again $\left.g_1\circ g_2\right|_{E_2}\colon E_2\to Z$ is a $\IP^2$-bundle.
We denote by $E_1^2$ the strict transform of $E_1$ in $X_2$. Observe that $\left.g_1\circ g_2\right|_{E_1^2}\colon E_1^2\to Z$ is an $\IF_1$-bundle. The natural fibration $\IF_1 \to \IP^1$ defines a $\IP^1$-bundle structure $\pi\colon E_1^2\to S$ over a surface $S$.

\item Let $Z_2\subseteq E_2$ be a surface such that the intersection of $Z_2$ with a general fibre of $\left.g_1\circ g_2\right|_{E_2}\colon E_2\to Z$ is a line and the intersection of $Z_2$ with $E_1^2$ is of the form
$s_2\cup h_1\cup\ldots\cup h_k$ where $s_2$ is a section of $\left.g_1\circ g_2\right|_{E_1^2}\colon E_1^2\to Z$,
and $h_i=(g_1\circ g_2)^{-1}(p_i)\cap E_1^2\cap E_2$ for some point $p_i\in Z$.
We can further assume that $h_i$ and $s_2$ meet transversally for all $i$.
We denote by $g_3\colon X_3\to X_2$ the blow up of $Z_2$, with exceptional divisor~$E_3$. We denote by $E_1^3$ the strict transform of $E_1^2$ in $X_3$.
\end{enumerate}
The construction is summarized in Diagram~\ref{diag:tower_counter_example} and is illustrated in Figure~\ref{pic:tower_counter_example} (Appendix). When there is no risk of confusion, we simply denote by $E_j$, instead of~$E_j^i$, the strict transform of $E_j$ in $X_i$ for $i=2, 3$ and $j=1, 2$.

\begin{proposition}\label{prop:logres}
The pair $(X_3, E_1)$ admits a log resolution $\mu\colon \widetilde X\to X_3$ such that:
\[
K_{\widetilde X}+(\mu^{-1})_\ast E_1=\mu^*(K_{X_3}+E_1)+\sum_i  F_i,
\]
with $\cup_i F_i=\Exc(\mu)$. The pair $(X_3, t E_1)$ is \klt for any $t\in\mathopen]0,1\mathclose[$.
\end{proposition}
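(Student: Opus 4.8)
The plan is to construct $\mu$ explicitly as a blow-up of finitely many points and to deduce both assertions from a single multiplicity count. First note that $X_3$ is smooth: it is obtained from the smooth fourfold $X$ by blowing up successively the smooth curve $Z$, the smooth curve $Z_1$ and the smooth surface $Z_2$, each inside a smooth ambient variety. In particular $K_{X_3}+E_1$ is $\IQ$-Cartier, and to produce a log resolution of $(X_3,E_1)$ it suffices to resolve the singularities of the prime divisor $E_1\coloneqq E_1^3$ and to make its strict transform, together with the exceptional locus, simple normal crossing. Now $E_1^3$ is the strict transform of the smooth threefold $E_1^2\subset X_2$ under the blow-up $g_3$ of $Z_2$. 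Along the locus where $Z_2$ meets $E_1^2$ transversally the intersection $C\coloneqq Z_2\cap E_1^2=s_2\cup h_1\cup\cdots\cup h_k$ is a smooth curve and the strict transform is smooth there; hence the singularities of $E_1^3$ can only sit over the singular points of $C$, namely the $k$ nodes $s_2\cap h_i$.

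Next I would analyse the local picture at one node $q=s_2\cap h_i$. Using the transversality of the two branches $s_2$ and $h_i$ together with the smoothness of $Z_2$, one can choose analytic coordinates $(x_1,x_2,x_3,x_4)$ on $X_2$ in which $E_1^2=\{x_1=0\}$ and $Z_2=\{x_1=x_3x_4,\ x_2=0\}$, so that $C$ is the union of the two coordinate lines $\{x_3=0\}$ and $\{x_4=0\}$ inside $\{x_1=x_2=0\}$. Blowing up $Z_2$ and passing to the affine chart containing $q$, a direct computation identifies $E_1^3$ locally with the quadric cone $\{x_2\lambda+x_3x_4=0\}\subset\IC^4$, that is, a three-dimensional ordinary double point of multiplicity two. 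Therefore $E_1^3$ carries exactly $k$ isolated ordinary double points and is smooth elsewhere, and I let $\mu\colon\widetilde X\to X_3$ be the blow-up of these $k$ points. Each such blow-up resolves the corresponding double point, so $(\mu^{-1})_\ast E_1$ is smooth; one checks that it meets each exceptional divisor $F_i\cong\IP^3$ transversally, whence $(\mu^{-1})_\ast E_1\cup\Exc(\mu)$ is simple normal crossing with $\Exc(\mu)=\bigcup_i F_i$.

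It then remains to compute discrepancies. Blowing up a point of the smooth fourfold $X_3$ yields an exceptional divisor of discrepancy $4-1=3$, while $\mu^\ast E_1=(\mu^{-1})_\ast E_1+2\sum_i F_i$ because $E_1^3$ has multiplicity two at each node. Substituting gives
\[
K_{\widetilde X}+(\mu^{-1})_\ast E_1=\mu^\ast(K_{X_3}+E_1)+\sum_i(3-2)F_i=\mu^\ast(K_{X_3}+E_1)+\sum_i F_i,
\]
which is the stated formula. The \klt assertion is then formal: for $t\in\mathopen]0,1\mathclose[$ one has $\lfloor tE_1\rfloor=0$ and
\[
K_{\widetilde X}+t(\mu^{-1})_\ast E_1=\mu^\ast(K_{X_3}+tE_1)+\sum_i(3-2t)F_i,
\]
with every coefficient $3-2t>1$, so the pair $(X_3,tE_1)$ is \klt as claimed. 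I expect the only real difficulty to be the local model at the nodes: proving, from the transversality of $s_2$ and $h_i$ and the smoothness of $Z_2\subseteq E_2$ alone, that $E_1^3$ acquires precisely an ordinary double point of multiplicity two there, rather than a more degenerate singularity — for it is exactly the value $2$ that forces the discrepancy to equal $1$ and makes the threshold computation go through.
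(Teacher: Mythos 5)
Your proof is correct and follows essentially the same route as the paper: both identify the singularities of $E_1^3$ as isolated ordinary double points lying over the nodes of $s_2\cup h_1\cup\cdots\cup h_k$ via the same local normal form (your presentation $E_1^2=\{x_1=0\}$, $Z_2=\{x_1=x_3x_4,\,x_2=0\}$ is the paper's model $Z_2=\{w=z=0\}$, $E_1^2=\{xy-w=0\}$ after the coordinate change $x_1=w-xy$, $x_2=z$, $x_3=x$, $x_4=y$), resolve them by blowing up these points, and obtain $\mu^*E_1=(\mu^{-1})_*E_1+2\sum_iF_i$ and $K_{\widetilde X}=\mu^*K_{X_3}+3\sum_iF_i$, hence coefficient $3-2t$ on each $F_i$. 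The local-model step you flag as the one remaining difficulty is handled at exactly the same level of detail in the paper, which likewise simply exhibits the normal form without deriving it from the transversality hypotheses.
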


\begin{proof}
The singularities of $E_1^3$ lie over the singular points of $s_2\cup h_1\cup\ldots\cup h_k$.
To understand them, we can choose local analytic coordinates $(x,y,z,w)$ on $X_2$ such that $Z_2$ is locally given by
$\{w=z=0\}$
and $E^2_1$ by $\{xy-w=0\}$.
The blow up of~$X_2$ along $Z_2$ is then locally given by:
\[
    \{((x,y,z,w),[s:t])\in \IC^4\times \IP^1\,|\, zt-ws=0\}.
\]
We compute that the variety $E_1^3$ has one singular point in the chart $s\neq 0$, with equation $zv-xy$, where $v=t/s$. It is smooth on the other chart.
A desingularization of $E_1^3$ is given by the blow up $\mu\colon\widetilde X\to X_3$ of the singular points of $E_1^3$.
Since $E_1^3$ has isolated singularities of type $A_1$, a local computation shows that the exceptional divisors $F_i$ of $\mu$ meet the strict transform $(\mu^{-1})_\ast E_1^3$ transversally, so $\mu$~is a log resolution and we have:
\begin{align*}
\mu^\ast E_1^3 &= (\mu^{-1})_\ast E_1^3+2\sum_i F_i,\\
K_{\widetilde X} &= \mu^\ast K_{X_3}+3\sum_i F_i.
\end{align*}
It follows that:
\[
K_{\widetilde X}+(\mu^{-1})_\ast E_1^3=\mu^*(K_{X_3}+E_1^3)+\sum_i  F_i,
\]
and that for any $t\in\mathopen]0,1\mathclose[$ we have:
\begin{align*}
K_{\widetilde X} + (\mu^{-1})_\ast (t E_1^3)
&=\mu^*(K_{X_3}+tE_1^3)+ (3-2t)\sum_i  F_i,
\end{align*}
proving that the pair $(X_3, E_1)$ is \klt.
\end{proof}

Over a general point of $Z$, the fiber of $E_1^3$ is isomorphic to the blow-up of $\IF_1$ at a point not belonging to a $(-1)$-curve. Over the points $p_i$, the fiber of $E_1^3$ is the union of the blow-up of $\IF_1$ at a point and a $\IP^1$-bundle over $\IP^1$.
Consider the composition $\pi'\colon E_1^3 \to E_1^2 \xrightarrow{\pi} S$. We denote by $\ell'_1$~the strict transform of a fibre of~$\pi$ passing through~$s_2$ and by $\ell''_1$~the strict transform  by $\pi'$ of  a fibre of~$\pi$ passing through~$h_i$.

\begin{lemma}\label{lem:num_equiv}
The curves $\ell'_1$ and $\ell''_1$ are numerically equivalent and generate an extremal ray of the cone $\overline{\NE}(X_3/X)$. More precisely, we have:
\begin{align*}
\ell'_1\cdot E_1&=\ell''_1\cdot E_1=-2,\\
\ell'_1\cdot E_2&=\ell''_1\cdot E_2=0,\\
\ell'_1\cdot E_3&=\ell''_1\cdot E_3=1,\\
\ell'_1\cdot K_{X_3}&=\ell''_1\cdot K_{X_3}=1.
\end{align*}
\end{lemma}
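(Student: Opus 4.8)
The plan is to compute all four intersection numbers directly from the tower of blow-ups, to deduce the numerical equivalence $\ell'_1\equiv\ell''_1$ from the equality of these numbers, and finally to identify $\IR_{\geq 0}[\ell'_1]$ as an edge of the relative cone $\overline{\NE}(X_3/X)$ by means of a supporting divisor.

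First I would record the elementary blow-up formulas that drive everything. Since $g_3$ is the blow-up of the smooth surface $Z_2\subset X_2$ of codimension two, one has $K_{X_3}=g_3^\ast K_{X_2}+E_3$ and, for a divisor $D$ on $X_2$, $g_3^\ast D=\widehat D+(\mult_{Z_2}D)E_3$, where $\widehat D$ is the strict transform. Here $\mult_{Z_2}E_1=0$ because $Z_2\not\subseteq E_1^2$, whereas $\mult_{Z_2}E_2=1$ because $Z_2\subseteq E_2$. Both $\ell'_1$ and $\ell''_1$ are strict transforms under $g_3$ of a ruling line of $\pi$ meeting $Z_2$ transversally in a single point---for $\ell''_1$ one checks that a general ruling through $h_i$ avoids the point of $s_2$ lying over $p_i$---so each meets $E_3$ exactly once, and the projection formula $g_3^\ast D\cdot\widehat C=D\cdot C$ reduces every computation to one on $X_2$.

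Next I would carry out the computations on $X_2$ for a single ruling line $\ell$ of the $\IP^1$-bundle $\pi\colon E_1^2\to S$, using that $E_1^2$ is an $\IF_1$-bundle over $Z$ in which $\ell$ is a fibre $f$ of the ruling and $E_2\cap E_1^2$ meets each $\IF_1$ along its $(-1)$-section $C_{-1}$. This gives $E_2\cdot\ell=C_{-1}\cdot f=1$ immediately. For $E_1^2\cdot\ell$ I would compute $N_{E_1^2/X_2}$ from $g_2^\ast E_1=E_1^2+E_2$: its restriction to $E_1^2$ is $\rho^\ast\cO_{E_1}(-1)\otimes\cO_{E_1^2}(-(E_2\cap E_1^2))$, where $\rho\colon E_1^2\to E_1$ is the fibrewise blow-down, so its degree on $f$ equals $-1+(-1)=-2$. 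Finally, from $K_{X_1}=g_1^\ast K_X+2E_1$ and $K_{X_2}=g_2^\ast K_{X_1}+2E_2$, pushing $\ell$ down to the line $L=(g_2)_\ast\ell$ in a $\IP^2$-fibre of $E_1$ gives $K_{X_1}\cdot L=2E_1\cdot L=-2$, hence $K_{X_2}\cdot\ell=-2+2\cdot 1=0$. Feeding these values, together with $E_3\cdot\ell'_1=1$, into the formulas of the previous step yields $(E_1,E_2,E_3,K_{X_3})\cdot\ell'_1=(-2,0,1,1)$; the same holds for $\ell''_1$, since $\ell$ and the ruling through $h_i$ are numerically equivalent fibres of $\pi$ in $E_1^2$ and the correction terms depend only on this class and on the common transversality.

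The numerical equivalence $\ell'_1\equiv\ell''_1$ then follows formally: both curves are contracted by $h$ to points, so they pair to zero against $h^\ast\Pic(X)$, while $\NN^1(X_3)_\IQ=h^\ast\NN^1(X)_\IQ\oplus\langle E_1,E_2,E_3\rangle$, on which they agree by the computation above. For extremality I would first determine $\overline{\NE}(X_3/X)$, which has dimension three, equal to the relative Picard number, by writing down the classes of the three blow-up fibre curves---the line $\lambda_3$ in a fibre of $E_3\to Z_2$, the strict transform of a line in a $\IP^2$-fibre of $E_2\to Z_1$, and $\ell'_1$ itself---and checking that their coordinate vectors against $(E_1,E_2,E_3)$ are linearly independent. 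I would then exhibit a divisor $D=aE_1+bE_2+cE_3+h^\ast A$ with $A$ ample and $c=2a$, so that $D\cdot\ell'_1=0$ while $D$ is strictly positive on the remaining generators, producing a supporting hyperplane that meets $\overline{\NE}(X_3/X)$ exactly along $\IR_{\geq 0}[\ell'_1]$. The hard part is this last step: one must show that these fibre classes, together with the curves in the fibres of $E_1^3\to Z$ over the special points $p_i$---where the fibre degenerates into the union of a blown-up $\IF_1$ and a $\IP^1$-bundle over $\IP^1$---really generate the whole relative cone, so that positivity of $D$ on the listed generators genuinely certifies relative nefness and hence extremality.
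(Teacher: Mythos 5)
Your computation of the four intersection numbers is correct and follows essentially the same route as the paper: you push $\ell'_1$ and $\ell''_1$ down to fibres of $\pi\colon E_1^2\to S$, use $g_2^\ast E_1=E_1^2+E_2$, $g_3^\ast E_1^2=E_1^3$, $g_3^\ast E_2=E_2^3+E_3$ together with the discrepancy formulas, and you deduce $\ell'_1\equiv\ell''_1$ from the fact that both curves are contracted over $X$ and pair equally with $E_1,E_2,E_3$, which span $\NN^1(X_3/X)$. Up to this point the proposal matches the paper's proof in substance, differing only in bookkeeping.

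The extremality argument, however, contains a genuine gap --- one you flag yourself. The supporting-divisor strategy requires the divisor $D=aE_1+bE_2+cE_3+h^\ast A$ to be nef relative to $X$, with $D^\perp\cap\overline{\NE}(X_3/X)=\IR_+[\ell'_1]$. Verifying $D\cdot C>0$ on three linearly independent curve classes certifies this only if those classes generate the relative cone, and they do not: the cone is three-dimensional but there is no reason for it to be simplicial. Indeed, already the subcone $\overline{\NE}(E_1^3/Z)$ needs four generators in the paper (the classes of $\ell'_1$, $\ell''_1$ and two further curves $\ell_2$, $\ell_3$), and the degenerate fibres of $E_1^3\to Z$ over the points $p_i$, as well as special fibres of $E_3\to Z_2$, contribute additional curve classes whose position relative to $D^\perp$ you have not controlled. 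Since extremality is half the content of the lemma, this is not a detail that can be waved through.

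The paper avoids describing $\overline{\NE}(X_3/X)$ globally. It takes an arbitrary decomposition $\ell'_1=C_1+C_2$ into effective classes, uses $\ell'_1\cdot E_1=-2<0$ to arrange that all components of $C_1$ lie in $E_1^3$ while $C_2\cdot E_1\geq0$, expresses $C_1$ in terms of the four explicit generators of $\overline{\NE}(E_1^3/Z)$, and then a short computation using the already-established relation $\ell'_1\equiv\ell''_1$ and intersection with $E_1$ forces $a+b=1$ and $c\ell_2+d\ell_3+C_2\equiv0$, hence $C_1,C_2\in\IR_+[\ell'_1]$. If you wish to keep your framework, you should either replace the last step by a localisation argument of this type, or actually determine all extremal rays of $\overline{\NE}(X_3/X)$ (including those supported over the $p_i$), which is considerably more work than the three-generator check you propose.
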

\begin{proof}
To prove the numerical equivalence, it is enough to prove that:
\[
\ell'_1\cdot E_i=\ell''_1\cdot E_i \text{ for } i=1,2,3.
\]
\begin{itemize}
\item We have $\ell'_1\cdot E_2=\ell''_1\cdot E_2=0$ as both $\ell'_1$ and $\ell''_1$ are disjoint from $E_2$.
\item
We have $\ell'_1\cdot E_3=\ell''_1\cdot E_3=1$ as both $\ell'_1$ and $\ell''_1$ meet $E_3$ transversally.
\item
We have $g_2^\ast E_1 = E_1^2+E_2$, $g_3^\ast E_1^2 = E_1^3$, and $g_3^\ast E_2^2 = E_2^3 + E_3$.
Set $\bar \ell'_1=g_{3 *}\ell'_1$ and $\bar \ell''_1=g_{3 *}\ell''_1$. The curves $\bar \ell'_1$ and $\bar \ell''_1$ are numerically equivalent as they are both fibres of $\pi\colon E_1^2\to S$. We get the numerical equivalence by computing as follows:
\[
E_1^3\cdot \ell'_1
=g_3^*E_1^2\cdot \ell'_1
=E_1^2\cdot \bar \ell'_1
=E_1^2\cdot \bar \ell''_1
=g_3^*E_1^2\cdot \ell''_1
=E_1^3\cdot \ell''_1.
\]
Moreover, $E_1^2\cdot \bar \ell'_1=(\varepsilon_2^*E_1-E_2)\cdot \bar \ell'_1=-2$.
Using the formulas:
\[
K_{X_1} = g_1^\ast K_X + 2 E_1,
\quad K_{X_2} = g_2^\ast K_{X_1} + 2 E_2,
\quad K_{X_3} = g_3^\ast K_{X_2} + E_3,
\]
we get $K_{X_3}=(g_1 g_2 g_3)^*K_X+2E_1+4E_2+5E_3$,
hence $K_{X_3}\cdot \ell'_1=1$.
\end{itemize}

Let us show that the ray $\IR_+[\ell'_1]$ is extremal. Assume $\ell'_1=C_1+C_2$ in $\overline{\NE}(X_3/Z)$. We have to show that $C_1,C_2\in \IR_+[\ell'_1]$. As $\ell'_1\cdot E_1=-2$, we can assume that every component of $C_2$ is linearly equivalent to a component not in $E_1$ and all the components of $C_1$ are in $E_1$. We have 
\[\overline{\NE}(E_1/Z)=\IR_+[\ell'_1]+\IR_+[\ell''_1]+\IR_+[\ell_2]+\IR_+[\ell_3],\]
therefore:
\begin{align*}
C_1\equiv& a \ell'_1+b\ell''_1+c\ell_2+d\ell_3\quad\mathrm{and}\\
\ell'_1\equiv& a \ell'_1+b\ell''_1+c\ell_2+d\ell_3+C_2,
\end{align*}
where the first numerical equivalence is in $E_1$ and the second in $X_3$.
We already proved that $\ell'_1\equiv \ell''_1$, hence we get:
\[
(1-a-b)\ell'_1\equiv c\ell_2+d\ell_3+C_2.
\]
As the right hand side is an effective curve, we have $1-a-b\geq 0$.
By taking the intersection product with $E_1$, we get
$-2(1-a-b)=c+C_2\cdot E_1\geq 0$. Therefore $a+b=1$ and $c\ell_2+d\ell_3+C_2\equiv 0$. This proves that $\IR_+[\ell'_1]$ is an extremal ray. The proof for $\IR_+[\ell''_1]$ is similar.
\end{proof}

\begin{lemma}\label{antiflip}
The antiflip
$\phi\colon X_3\dasharrow Y_3$ of the ray $\IR_+[\ell'_1]$ exists. The variety~$Y_3$ has terminal singularities.
\end{lemma}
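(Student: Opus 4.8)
The plan is to realise the antiflip as the flip of an auxiliary \klt pair, and then to control the singularities of $Y_3$ by recognising this flip as a flop. First I would fix a rational number $t\in\mathopen]\tfrac12,1\mathclose[$ and consider the pair $(X_3,tE_1)$, which is \klt by Proposition~\ref{prop:logres}. By Lemma~\ref{lem:num_equiv} the class $\ell'_1$ spans an extremal ray $R\coloneqq\IR_+[\ell'_1]$ of $\overline{\NE}(X_3/X)$, and $(K_{X_3}+tE_1)\cdot\ell'_1=1-2t<0$, so $R$ is $(K_{X_3}+tE_1)$-negative while $K_{X_3}\cdot\ell'_1=1>0$. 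Thus the contraction of $R$ is a $K_{X_3}$-positive contraction, and its flip, once it exists, is by definition the antiflip of $R$.

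Next I would produce the contraction and the flip. Applying the contraction theorem to the \klt pair $(X_3,tE_1)$ yields a contraction $\phi\colon X_3\to Y$ of $R$ over $X$. I claim $\phi$ is small: the curves whose class lies in $R$ are exactly the strict transforms of the $\pi$-fibres meeting $s_2$ or some $h_i$, because a general $\pi$-fibre $\ell_1$ satisfies $\ell_1\cdot E_3=0$, whereas $\ell'_1\cdot E_3=\ell''_1\cdot E_3=1$ by Lemma~\ref{lem:num_equiv}, so general fibres are not numerically proportional to $\ell'_1$. As $s_2$ and the $h_i$ are curves, these fibres sweep out a surface, of codimension two in the smooth fourfold $X_3$; hence $\Exc(\phi)$ has codimension at least two and $\phi$ is a small $(K_{X_3}+tE_1)$-flipping contraction. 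Its flip $\phi^+\colon Y_3\to Y$ exists by~\cite[Corollary~1.4.1]{HMcK10}. Since $\phi$ has relative Picard number one and a flip reverses the relative nef cone on strict transforms, the positivity $K_{X_3}\cdot\ell'_1>0$ turns into $K_{Y_3}$-negativity on the flipped curves, confirming that $X_3\dashrightarrow Y_3$ is the antiflip of $R$.

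The main difficulty is the terminality of $Y_3$, since antiflips generally worsen singularities. The key observation is that $(K_{X_3}+\tfrac12 E_1)\cdot R=0$, so $K_{X_3}+\tfrac12 E_1$ is $\phi$-trivial and the birational map $X_3\dashrightarrow Y_3$ is precisely the flop of the flopping contraction $\phi$ for the pair $(X_3,\tfrac12 E_1)$ (in particular the flip constructed above does not depend on the chosen $t$). Evaluating the formula of Proposition~\ref{prop:logres} at $t=\tfrac12$ shows that the only exceptional discrepancy over $X_3$ of the log resolution $\mu$ equals $3-2t=2>0$; as $\mu$ blows up the isolated $A_1$ points of $E_1^3$ on the smooth fourfold $X_3$, the pair $(X_3,\tfrac12 E_1)$ is terminal. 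Flops preserve discrepancies, so $(Y_3,\tfrac12 E_1^+)$ is terminal as well, where $E_1^+$ denotes the strict transform of $E_1$.

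Finally I would deduce terminality of $Y_3$ itself. For every divisor $F$ over $Y_3$ one has
\[
\crep(F,Y_3)=\crep\bigl(F,Y_3,\tfrac12 E_1^+\bigr)+\tfrac12\,\mult_F(E_1^+)\geq \crep\bigl(F,Y_3,\tfrac12 E_1^+\bigr)>0,
\]
because $E_1^+$ is effective, so $\mult_F(E_1^+)\geq 0$; hence $Y_3$ has terminal singularities. The steps I expect to require the most care are the verification that $\phi$ is small — that is, ruling out that the whole transform of $E_1$ is contracted, which is exactly what the computation $\ell_1\cdot E_3=0\neq\ell'_1\cdot E_3$ prevents — and the identification of the antiflip with a flop, which is what makes the discrepancy comparison with the smooth model $X_3$ possible.
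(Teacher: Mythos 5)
Your proposal is correct and follows essentially the same route as the paper: the same auxiliary pair $(X_3,tE_1)$ with $t\in\mathopen]\tfrac12,1\mathclose[$, existence of the flip via \cite[Corollary~1.4.1]{HMcK10}, and terminality of $Y_3$ deduced from the discrepancy computation of Proposition~\ref{prop:logres} together with \cite[Corollary~2.35]{KollarMori}. The only (harmless) variations are that you add an explicit check that the contraction of $\IR_+[\ell'_1]$ is small, and that you pass to $t=\tfrac12$ to treat the map as a flop preserving discrepancies, whereas the paper stays at $t\in\mathopen]\tfrac12,1\mathclose[$ and invokes \cite[Corollary~3.42]{KollarMori} directly — both arguments rest on the same negativity lemma.
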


\begin{proof}
In order to prove that the antiflip exists, it is enough to find a \klt boundary~$\Delta$ such that $(K_{X_3}+\Delta)\cdot \ell'_1<0$.
We set $\Delta=tE_1$ with $1/2<t<1$. The pair $(X_3, \Delta)$ is \klt by Proposition~\ref{prop:logres}, and using Lemma~\ref{lem:num_equiv} we get:
\[
(K_{X_3}+tE_1)\cdot \ell'_1=1-2t<0.\]
Then the $(K_{X_3}+tE_1)$-flip of $\ell'_1$ exists by~\cite[Corollary 1.4.1]{HMcK10}, we denote it by $\phi\colon X_3\dasharrow Y_3$.
By Proposition~\ref{prop:logres} the pair $(X_3,\Delta)$ is terminal  in the sense of \cite[Definition~2.34]{KollarMori}, so by
 \cite[Corollary~3.42]{KollarMori} the pair $(Y_3,\Delta^+)$ is also terminal. By ~\cite[Corollary 2.35]{KollarMori} the variety $Y_3$ has terminal singularities.
\end{proof}

We notice that $\phi$ is an isomorphism in a neighborhood of $E_2$.
We denote by $\overline E_1$, $\overline E_2$, $\overline E_3$ the strict transforms of $E_1$, $E_2$, $E_3$ in $Y_3$. In the next lemmata, we construct extremal contractions $Y_3\to Y_2\to Y_1$.

\begin{lemma}\label{contr2}
Let $\ell_2$ be the strict transform in $\overline E_2$ of a line in a fibre of $E_2\to Z$.
The extremal ray $\IR_+[\ell_2]$ is $K_{Y_3}$-negative in $\overline{\NE}(Y_3/X)$. We denote by  $\eta_3\colon Y_3\to Y_2$ the contraction of $\IR_+[\ell_2]$. We have:
\[
K_{Y_3}=\eta_3^*K_{Y_2}+\frac{1}{2} \overline E_2.
\]
\end{lemma}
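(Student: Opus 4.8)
The plan is to transport every computation from $X_3$ to $Y_3$ through the antiflip $\phi$, which is an isomorphism in a neighbourhood of $E_2$, and then to read off both the sign of $K_{Y_3}\cdot\ell_2$ and the discrepancy of $\eta_3$ from two intersection numbers computed on $X_3$. First I would record that, since $Z_2\subset E_2$ is a Cartier divisor in the smooth threefold $E_2$, the strict transform $E_2^3$ of $E_2$ under $g_3$ is isomorphic to $E_2$; hence $\overline E_2\cong E_2^3\cong E_2$ is again a $\IP^2$-bundle over $Z$, and $\ell_2$ is identified with the strict transform $\tilde\ell$ in $X_3$ of a general line $\ell$ in a fibre of $E_2\to Z$. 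As $\phi$ is an isomorphism near $E_2$ and $\ell_2\subset\overline E_2$ lies in this locus, we get $K_{Y_3}\cdot\ell_2=K_{X_3}\cdot\tilde\ell$ and $\overline E_2\cdot\ell_2=E_2^3\cdot\tilde\ell$.

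Next I would compute these two numbers using the formulas of Lemma~\ref{lem:num_equiv}. Since $E_2|_{E_2}=\cO_{E_2}(-1)$ restricts to $\cO_{\IP^1}(-1)$ on a line, $E_2\cdot\ell=-1$; and as $\ell$ meets $Z_2$ transversally in one point, $E_3\cdot\tilde\ell=1$, so that
\[
E_2^3\cdot\tilde\ell=(g_3^\ast E_2-E_3)\cdot\tilde\ell=-1-1=-2 .
\]
For the canonical class, $K_{X_2}\cdot\ell=(g_2^\ast K_{X_1}+2E_2)\cdot\ell=-2$, whence
\[
K_{X_3}\cdot\tilde\ell=(g_3^\ast K_{X_2}+E_3)\cdot\tilde\ell=-2+1=-1 .
\]
Thus $K_{Y_3}\cdot\ell_2=-1<0$ and $\ell_2$ is $K_{Y_3}$-negative.

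To produce the contraction I would check that $\IR_+[\ell_2]$ is extremal in $\overline{\NE}(Y_3/X)$, arguing as in Lemma~\ref{lem:num_equiv}. Every irreducible curve whose class lies in $\overline{\NE}(Y_3/X)$ is contracted by $Y_3\to X$, hence maps to a point of $Z$; if in addition it is contained in $\overline E_2$, it lies in a $\IP^2$-fibre and is therefore a multiple of $\ell_2$. Writing $\ell_2\equiv C_1+C_2$ with $C_1$ collecting the components lying in $\overline E_2$ and $C_2$ the remaining ones, one has $\overline E_2\cdot C_2\geq0$ and $C_1\equiv a\,\ell_2$ with $a\geq0$; intersecting $(1-a)\ell_2\equiv C_2$ with $\overline E_2$ forces $a\geq1$, and salience of the relative cone then forces $a=1$ and $C_2=0$, so both classes lie in $\IR_+[\ell_2]$. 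By the contraction theorem, applied to the $\IQ$-factorial terminal variety $Y_3$ of Lemma~\ref{antiflip}, this ray is contractible; since the lines $\ell_2$ sweep out $\overline E_2$ and are contracted to the base $Z$ of the bundle, $\eta_3\colon Y_3\to Y_2$ is the divisorial contraction of $\overline E_2$. Finally, writing $K_{Y_3}=\eta_3^\ast K_{Y_2}+d\,\overline E_2$ and intersecting with $\ell_2$ gives $-1=d\cdot(-2)$, that is $d=\tfrac12$.

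The step I expect to require the most care is the extremality of $\IR_+[\ell_2]$ in the global relative cone $\overline{\NE}(Y_3/X)$: although the $\IP^2$-bundle structure makes the relative cone of $\overline E_2$ a single ray, one must still exclude decompositions using curves lying outside $\overline E_2$, which is exactly where the effectivity and salience argument enters. The two intersection computations themselves are routine once the isomorphism near $E_2$ has been invoked.
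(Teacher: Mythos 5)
Your proof is correct and takes essentially the same route as the paper: transport everything through $\phi$, which is an isomorphism near $E_2$, obtain $K_{Y_3}\cdot\ell_2=K_{X_3}\cdot\ell_2=-1$ and $\overline E_2\cdot\ell_2=-2$, and solve $-1=-2a$ for the discrepancy. You additionally verify extremality of $\IR_+[\ell_2]$ (mirroring the argument of the preceding lemma) and spell out the intersection numbers that the paper leaves implicit; these details check out.
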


\begin{proof}
As $E_2$ is disjoint from the indeterminacy locus of $\phi$, we have $K_{Y_3}\cdot \ell_2=K_{X_3}\cdot \ell_2=-1$.
Writing $K_{Y_3}=\eta_3^*K_{Y_2}+a \overline E_2$, we have $a=\frac{1}{2}$ since:
\[
-1=K_{Y_3}\cdot \ell_2=\eta_3^*K_{Y_2}\cdot \ell_2+a \overline E_2\cdot \ell_2=-2a.
\]
\end{proof}

\begin{lemma}\label{contr1}
Let $\ell_1\subset Y_2$ be the strict transform in $\overline E_1$ of a fibre of $\pi\colon E_1\to Z$ not passing through $s_2\cup h_1\cup\ldots\cup h_k$. Then $\IR_+[\ell_1]$ is a $K_{Y_2}$-negative extremal ray in $\overline{\NE}(Y_2/X)$.
\end{lemma}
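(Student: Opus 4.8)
The plan is to check the two parts of the statement separately: first that $K_{Y_2}\cdot\ell_1<0$, and then that $\IR_+[\ell_1]$ is extremal in $\overline{\NE}(Y_2/X)$. The negativity is a direct computation; the extremality, following the pattern of Lemma~\ref{lem:num_equiv}, is where the real work lies.

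For the negativity, the decisive point is that a general fibre $\ell_1$ of $\pi$ avoids every locus that gets modified from step~$3$ onwards. Since $\ell_1$ does not meet $s_2\cup h_1\cup\ldots\cup h_k$, it is disjoint from the centre $Z_2$ of the blow-up $g_3$, hence from $E_3$; and, being a fibre over a general point of $S$, it is disjoint from the special fibres $\ell'_1$ and $\ell''_1$, which sweep out the flipping locus of the antiflip $\phi$ of Lemma~\ref{antiflip}. I would therefore compute $\ell_1\cdot E_i$ in $X_3$ exactly as for $\ell'_1$ in Lemma~\ref{lem:num_equiv}, the only change being that $\ell_1$ is untouched by $g_3$; this gives
\[
\ell_1\cdot E_1=-2,\qquad \ell_1\cdot E_2=1,\qquad \ell_1\cdot E_3=0,
\]
whence, from $K_{X_3}=(g_1g_2g_3)^*K_X+2E_1+4E_2+5E_3$, one gets $K_{X_3}\cdot\ell_1=0$.

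Because $\phi$ is an isomorphism in a neighbourhood of $\ell_1$, the same numbers hold on $Y_3$, so $K_{Y_3}\cdot\ell_1=0$ and $\overline E_2\cdot\ell_1=1$. Transporting this through $\eta_3$ by means of the discrepancy formula $K_{Y_3}=\eta_3^*K_{Y_2}+\frac{1}{2}\overline E_2$ of Lemma~\ref{contr2}, and noting that $\ell_1\not\equiv\ell_2$ (for instance $\overline E_2\cdot\ell_1=1$ while $\overline E_2\cdot\ell_2=-2$) so that $\eta_{3*}\ell_1$ is a genuine curve, the projection formula yields
\[
K_{Y_2}\cdot\ell_1=\eta_3^*K_{Y_2}\cdot\ell_1=\left(K_{Y_3}-\frac{1}{2}\overline E_2\right)\cdot\ell_1=-\frac{1}{2}<0,
\]
which settles the first assertion.

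For extremality I would mimic the decomposition argument in the proof of Lemma~\ref{lem:num_equiv}. Tracking $\overline E_1$ through $\eta_3$ (as just done for $K_{Y_2}$) shows that $\ell_1\cdot\overline E_1<0$, so in any expression $\ell_1\equiv C_1+C_2$ with $C_1,C_2$ effective in $\overline{\NE}(Y_2/X)$ one may assume every component of $C_1$ lies in $\overline E_1$ and that $C_2\cdot\overline E_1\geq0$. Writing $C_1$ in terms of the generators of the relative cone $\overline{\NE}(\overline E_1/X)$ and intersecting the resulting identity with $\overline E_1$, I expect the sign constraints to force $C_1,C_2\in\IR_+[\ell_1]$, exactly as in Lemma~\ref{lem:num_equiv}. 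The main obstacle is precisely this last step: it requires an explicit description of $\overline{\NE}(\overline E_1/X)$ after the antiflip $\phi$ and the contraction $\eta_3$, together with the intersection numbers of its generators against $\overline E_1$. Once these are available the argument is formal, whereas the negativity computation above is routine given Lemmata~\ref{lem:num_equiv} and~\ref{contr2}.
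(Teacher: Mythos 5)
Your computation of the negativity is correct and is exactly the paper's proof: the authors' entire argument for Lemma~\ref{contr1} is the chain
\[
K_{Y_2}\cdot \ell_1=K_{Y_3}\cdot \ell_1-\tfrac{1}{2}\,\overline E_2\cdot \ell_1=K_{X_3}\cdot \ell_1-\tfrac{1}{2}\,E_2\cdot \ell_1=-\tfrac{1}{2},
\]
resting, as you observe, on the fact that $\ell_1$ avoids the centre $Z_2$ of $g_3$ and the flipping locus of $\phi$, and your intersection numbers $\ell_1\cdot E_1=-2$, $\ell_1\cdot E_2=1$, $\ell_1\cdot E_3=0$, hence $K_{X_3}\cdot\ell_1=0$, are the right ones. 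Concerning extremality, the published proof is silent: it establishes only the sign of $K_{Y_2}\cdot\ell_1$ and leaves the extremality of $\IR_+[\ell_1]$ unargued (the same is true of Lemma~\ref{contr2}), so your sketch of a decomposition argument against the generators of $\overline{\NE}(\overline E_1/X)$ in the style of Lemma~\ref{lem:num_equiv} goes beyond what the authors write. The missing ingredient you identify --- an explicit description of the relative cone of $\overline E_1$ after the antiflip $\phi$ and the contraction $\eta_3$ --- is a genuine gap in your write-up, but it is one shared with (indeed, larger in) the paper's own proof, not a point where you diverge from it.
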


\begin{proof}
By abuse of notation we still denote by $\ell_1$ the strict transform of $\ell_1$ in $Y_3$.
By Lemma~\ref{contr2}, we obtain:
\[
K_{Y_2}\cdot \ell_1
= K_{Y_3}\cdot \ell_1- \frac{1}{2} \overline E_2\cdot \ell_1\\
= K_{X_3}\cdot \ell_1-\frac{1}{2}  E_2\cdot \ell_1
=-\frac{1}{2}.
\]
\end{proof}

We denote by  $\eta_2\colon Y_2\to Y_1\eqqcolon Y$ the contraction of $\IR_+[\ell_1]$

\begin{proposition}
There is a divisorial contraction $f\colon \overline X\to X$ with  codimension three center contained in the smooth locus. The divisorial contraction $f$ is not a weighted blow-up.
\end{proposition}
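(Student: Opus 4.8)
The plan is to set $\overline X := Y_1 = Y$ and to let $f \colon Y \to X$ be the birational morphism induced by composing the antiflip and the two contractions constructed above. Since the flipped ray $\IR_+[\ell'_1]$ is contracted by $h := g_1 \circ g_2 \circ g_3$ (the fibre $\ell'_1$ of $\pi$ lies over a single point of $Z$), the antiflip $\phi \colon X_3 \dashrightarrow Y_3$ of Lemma~\ref{antiflip} is performed over $X$, so $Y_3$ carries a projective birational morphism to $X$; as $\eta_3$ and $\eta_2$ are morphisms over $X$, the map $f$ is a projective birational morphism with connected fibres (since $f_\ast\cO_Y=\cO_X$). First I would identify the exceptional divisor of $f$. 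As $\phi$ is an isomorphism in codimension one, the variety $Y_3$ carries exactly the three divisors $\overline E_1, \overline E_2, \overline E_3$ over $X$; by Lemma~\ref{contr2} the contraction $\eta_3$ removes $\overline E_2$, and by Lemma~\ref{contr1} the contraction $\eta_2$ removes $\overline E_1$ (collapsing its $\pi$-fibres). Hence the unique prime divisor of $Y$ contracted by $f$ is the strict transform of $E_3$; since $h(E_3) = Z$, its image is the curve $Z$, of codimension three and contained in the smooth locus of the smooth fourfold $X$.

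Next I would verify the conditions of Definition~\ref{def:divisorial}. The variety $Y_3$ is locally $\IQ$-factorial and terminal by Lemma~\ref{antiflip}, and $\eta_3$, $\eta_2$ contract $K$-negative extremal rays (Lemmata~\ref{contr2} and~\ref{contr1}); being steps of a relative MMP, they preserve local $\IQ$-factoriality and terminality, so $Y$ is locally $\IQ$-factorial with terminal singularities. For the relative Picard number I simply count: $\rho(X_3/X) = 3$, the antiflip preserves it, and each divisorial contraction drops it by one, so $\rho(Y/X) = 1$ and $f$ contracts a single extremal ray whose exceptional locus is the prime divisor above. Finally, the discrepancy $\crep(\overline E_3, X) = \crep(E_3, X) = 5 > 0$ (computed as in the proof of Lemma~\ref{lem:num_equiv}), so $K_Y \sim_\IQ f^\ast K_X + 5\,\overline E_3$ and the contracted curves meet $\overline E_3$ negatively, whence $K_Y$ is $f$-antiample. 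Thus $f$ is a divisorial contraction to a codimension three smooth centre.

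The heart of the matter, and the step I expect to be the main obstacle, is to show that $f$ is not a weighted blow-up. The underlying reason is structural: the exceptional divisor of a weighted blow-up of the smooth curve $Z$ with weights $(n,m,1)$ is, Zariski-locally over $Z$, isomorphic to $Z \times \IP(n,m,1)$, so all its fibres over $Z$ are the irreducible surface $\IP(n,m,1)$ and the tower construction of~\S\ref{ss:tower} exhibits no accident (Remark~\ref{rem:no_accident}). By contrast, the surface $Z_2$ was chosen exactly so that $Z_2 \cap E_1^2 = s_2 \cup h_1 \cup \cdots \cup h_k$ contains the extra fibre components $h_i$ over the points $p_i \in Z$: this is precisely the contribution of Equation~\eqref{eq3}. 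I would show that this forces the fibre of $\overline E_3$ over each $p_i$ to be reducible, hence non-isomorphic to the fibre over a general point of $Z$ (compare the description of the fibres of $E_1^3$ given after Proposition~\ref{prop:logres}), so that $\overline E_3 \to Z$ is not a weighted projective bundle. Equivalently, in the language of Proposition~\ref{charactwbu}, the accident yields $f_\ast\cO_Y(-2\overline E_3)|_U = \cI_Z|_U$ near $p_i$, violating Condition~(\ref{charactwbu1}).

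The delicate point is that the antiflip $\phi$ and the contractions $\eta_3, \eta_2$ alter $Y$ precisely over the loci $s_2$ and $p_1, \ldots, p_k$, so one must check that the non-uniformity of the fibres genuinely survives these birational operations and produces a special fibre of $\overline E_3 \to Z$ in the final model $Y$. I would establish this by a local analytic computation around each $p_i$, in the spirit of Proposition~\ref{prop:logres} and of the intersection computations of Lemma~\ref{lem:num_equiv}, tracking the strict transforms of the components $h_i$ through the whole diagram. This local picture both certifies the failure of Condition~(\ref{charactwbu1}) and shows that the valuation defined by $\overline E_3$ is not monomial in any system of local coordinates adapted to $Z$, which is what ultimately distinguishes $f$ from every weighted blow-up.
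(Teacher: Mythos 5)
Your construction of $f$ and the verification that it is a divisorial contraction with codimension three smooth centre follow the paper's argument (terminality from Lemma~\ref{antiflip} and from contracting $K$-negative rays, the Picard number count $\rho(Y)=\rho(X_3)-2=\rho(X)+1$, and the identification of the exceptional divisor with the strict transform of $E_3$). The gap is in the second half, which you yourself flag as "the main obstacle" and then defer to an unspecified "local analytic computation": the actual proof that the fibre of $E\to Z$ over a special point $p_i$ differs from the general fibre is the entire content of the paper's argument, and it is not routine. The paper establishes it by cutting with a general smooth surface $S$ through $p_i$, recording the chain of exceptional curves $e_1,e_2,e_3$ on $S_3$ with their self-intersections, and using adjunction and the generality of $S$ to compute $K_{\overline S_3}\cdot \overline\ell_3=K_{Y_3}\cdot\overline\ell_3>0$; this forces $\overline S_3$ to acquire a singular point on $\overline\ell_3$, and the two possible positions of that point yield a special fibre of $f\colon E\to Z$ with either a curve of singularities or a single singular point --- in either case not isomorphic to the general fibre $\IP(3,2,1)$, which has exactly three singular points.

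Moreover, the two concrete mechanisms you propose in place of this computation do not hold up. First, Condition~(\ref{charactwbu1}) of Proposition~\ref{charactwbu} does \emph{not} fail near $p_i$: a general element $\varphi\in\cI_Z$ has strict transforms avoiding $Z_1$ and $Z_2$, hence $v_{E_3}(\varphi)=1$, so $f_*\cO_Y(-2E)\vert_U\neq\cI_Z\vert_U$ on every neighbourhood of $p_i$. Indeed there is no "accident" in the sense of Equation~\eqref{eq3} here, since $Z_2$ is a surface merely \emph{meeting} $E_1^2$ in the curve $s_2\cup h_1\cup\cdots\cup h_k$ rather than being contained in it; the example fails to be a weighted blow-up for the subtler reason that the local weighted-blow-up structure is not uniform along $Z$, which is precisely what cannot be detected by Conditions~(\ref{charactwbu1}) and~(\ref{charactwbu2}) alone in the absence of a transitive group action. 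Second, the claimed reducibility of the special fibre of $\overline E_3\to Z$ is asserted rather than proved, and it is not clear that extra components over $p_i$ survive the antiflip and the contractions of $\overline E_1$ and $\overline E_2$ (both of which modify $Y_3$ exactly over $s_2$ and the $p_i$); the paper sidesteps this by distinguishing the fibres through their singularities rather than their components.
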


\begin{proof}
Starting from the variety $Y_3$ obtained in Lemma~\ref{antiflip} by flipping over $X$ the extremal ray $\IR_+[l'_1]$, we contract the divisor $\overline E_2$ by Lemma~\ref{contr2}, obtaining the variety $Y_2$. By Lemma~\ref{contr1} we can contract the divisor $\overline E_1$ over $X$, obtaining a variety $Y$ with a birational morphism  $f\colon Y\to X$. The variety $Y$ has terminal singularities by Lemma~\ref{antiflip} and because it is obtained by contracting $K$-negative rays of the Mori cone. Furthermore $\rho(Y)=\rho(X_3)-2=\rho(X)+1$ so $f$ is a divisorial contraction.
We set $E$ the exceptional divisor of $f$, which is the strict transform of $\overline E_3$.
The construction is summarized in Diagram~\ref{diag:tower_counter_example}.

\begin{equation}\label{diag:tower_counter_example}
\xymatrix{
Y_3 \ar@{-->}[r]^{\phi^{-1}}\ar[d]^{\eta_3}                  & X_3 \ar[d]^{g_3} \\
Y_2 \ar[d]^{\eta_2}                  & X_2 \ar[d]^{g_2} \\
Y\coloneqq Y_1 \ar[dr]_f & X_1 \ar[d]^{g_1} \\
                         & X
}
\end{equation}

We prove now that $f$ is not a weighted blow-up.
As $g_1$ and $g_2$ blow-up curves and $g_3$ a surface, by the characterization of the tower construction given in Section~\ref{s:tower} the fibre of $f$ over a general point of $Z$ is the weighted projective plane $\IP(3,2,1)$, which has three singular points.

We prove that over the special points there is either a curve of singularities or a unique singular point. From this it follows that $f$ cannot be a weighted blow-up, otherwise all the fibres of the restriction $f\colon E\to Z$ would be isomorphic.

Let $p\in Z$ be a special point, that is, one of the images of the singularities of $s_2\cup h_1\cup\ldots\cup h_k$. Let $H\subseteq X$ be a smooth hyperplane through $p$ and $S\subseteq H$ a smooth surface through $p$.
By choosing $H, S$ general enough, we can assume that  the centre $Z_i$ is not contained in the strict transforms of $S$ in $X_i$ and that the strict transform of $S$ in $X_3$ is smooth.
We can find such $S$ as $X$ is smooth. Let $S_3$ be the strict transform of $S$ in $X_3$.
Let $\bar g$ be the restriction of $g_3\circ g_2\circ g_1$ to $S_3$. Then $g^{-1}(p)$ is a
 chain of curves $e_1,e_2,e_3$ where $e_3$ meets $e_2$ and $e_1$,
$E_2$ marks on $S_3$ the $(-2)$-curve $e_2$, $E_3$ the $(-1)$-curve $e_3$ and $E_1$ the union $e_3\cup e_1$ and $e_1$ is a $(-3)$-curve.

Let $\overline S_3$ be the strict transform of $S_3$ via the antiflip and $\lambda\colon S_3\to \overline S$ and $\mu\colon \overline S_3\to \overline S$ be the restriction of the antiflip to~$S_3$. Then  $\overline E_1\cup \overline E_2\cup \overline E_3$ marks on $\overline S_3$ a chain of curves $\overline e_2,\overline e_3,\overline \ell_3$ where $\overline e_3$ meets $\overline e_2$ and $\overline \ell_3$, and $\overline e_3$ and $\overline e_2$ are the strict transforms of $e_3$ and $e_2$.
From the adjunction formula it follows that $\overline \ell_3$ has positive intersection with the canonical bundle of $\overline S_3$. Indeed, if we let $\overline H_3$ be the strict transform of $H$ in $Y_3$ by our hypothesis $\overline H_3=\eta_3^*\eta_2^*f^*H$, and we have $K_{\overline H_3}=K_{Y_3}+\overline H_3$. Moreover, again by the generality of $S$, $\overline S_3$ is the pullback of $S$ via the restriction of $f\circ\eta_3\circ\eta_2$. By the adjunction formula we have
$$K_{\overline S_3}\cdot \overline\ell_3=(K_{\overline H_3}+\overline S_3)\cdot \overline\ell_3=K_{\overline H_3}\cdot \overline\ell_3=K_{Y_3}\cdot \overline\ell_3>0.$$
Therefore there is a singular point for $\overline S_3$ in $\overline \ell_3$ : otherwise $\overline S_3$ would be the minimal desingularisation of the surface obtained by contracting $e_1$, but this surface is isomorphic to $S_3$ and the exceptional curve is not $K$-negative.

Two cases may appear. Either for $S$~general the singular point lies away from~$\overline  e_3$,
in which case the fibre of  $f\colon E\to Z$ over~$p$ is  singular along a curve, or the singular point is the intersection of~$\overline  e_3$ and~$\overline  \ell_3$, and this curve gets contracted to a point via the MMP, in which case the fibre of  $f\colon E\to Z$ over $p$ has just one singular point.
\end{proof}


\section*{Appendix. Graphical illustrations of the key geometric constructions of the paper}
\addcontentsline{toc}{section}{Appendix. Graphical illustrations of
the key geometric constructions of the paper}

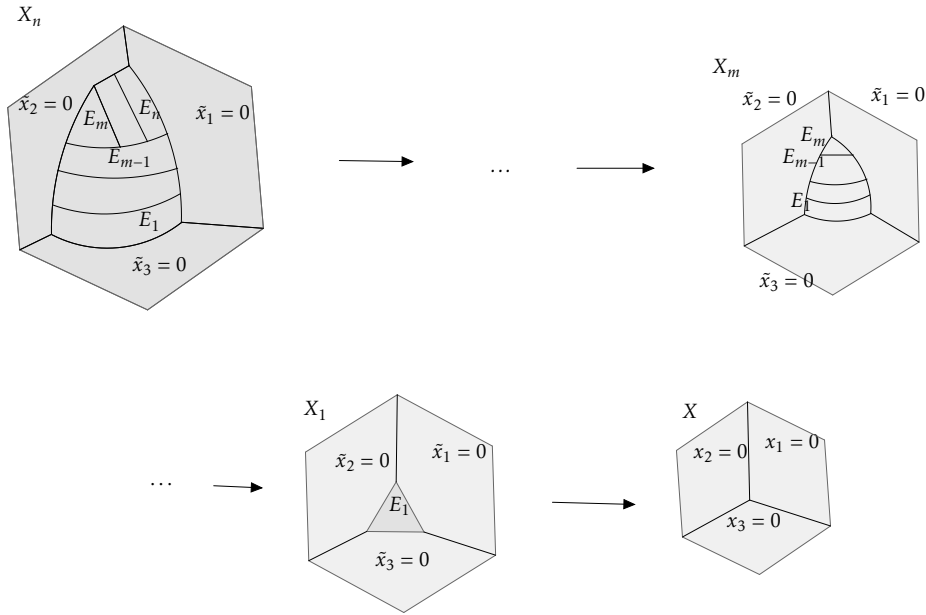
\begin{figure}[!ht]
\centering
\hspace*{-2cm}
\definecolor{wwwwww}{rgb}{0.4,0.4,0.4}
\resizebox{15cm}{12cm}{
\begin{tikzpicture}[line cap=round,line join=round,>=triangle 45,x=1.0cm,y=1.0cm]
\clip(-5.23,-11.09) rectangle (15.29,5.33);
\fill[color=wwwwww,fill=wwwwww,fill opacity=0.1] (-0.8,2.62) -- (-2.92,1.14) -- (-2.7,-1.44) -- (-0.36,-2.53) -- (1.76,-1.05) -- (1.54,1.52) -- cycle;
\fill[color=wwwwww,fill=wwwwww,fill opacity=0.1] (12.1,1.44) -- (10.52,0.48) -- (10.56,-1.37) -- (12.18,-2.26) -- (13.76,-1.3) -- (13.72,0.55) -- cycle;
\fill[color=wwwwww,fill=wwwwww,fill opacity=0.1] (-0.8,2.62) -- (-2.92,1.14) -- (-2.7,-1.44) -- (-0.36,-2.53) -- (1.76,-1.05) -- (1.54,1.52) -- cycle;
\fill[color=wwwwww,fill=wwwwww,fill opacity=0.1] (4.21,-4.07) -- (2.53,-5.11) -- (2.59,-7.08) -- (4.33,-8.02) -- (6.01,-6.98) -- (5.95,-5) -- cycle;
\fill[color=wwwwww,fill=wwwwww,fill opacity=0.1] (4.19,-5.65) -- (3.65,-6.55) -- (4.7,-6.56) -- cycle;
\fill[color=wwwwww,fill=wwwwww,fill opacity=0.1] (10.62,-4.2) -- (9.32,-5.08) -- (9.43,-6.65) -- (10.84,-7.33) -- (12.14,-6.45) -- (12.03,-4.89) -- cycle;
\draw [color=wwwwww] (-0.8,2.62)-- (-2.92,1.14);
\draw [color=wwwwww] (-2.92,1.14)-- (-2.7,-1.44);
\draw [color=wwwwww] (-2.7,-1.44)-- (-0.36,-2.53);
\draw [color=wwwwww] (-0.36,-2.53)-- (1.76,-1.05);
\draw [color=wwwwww] (1.76,-1.05)-- (1.54,1.52);
\draw [color=wwwwww] (1.54,1.52)-- (-0.8,2.62);
\draw [shift={(2.61,-1.06)}] plot[domain=2.56:3.16,variable=\t]({1*4.73*cos(\t r)+0*4.73*sin(\t r)},{0*4.73*cos(\t r)+1*4.73*sin(\t r)});
\draw [shift={(-1.1,0.81)}] plot[domain=4.24:5.37,variable=\t]({1*2.22*cos(\t r)+0*2.22*sin(\t r)},{0*2.22*cos(\t r)+1*2.22*sin(\t r)});
\draw (-0.7,1.9)-- (-1.34,1.54);
\draw [shift={(-4.08,-0.82)}] plot[domain=-0.03:0.68,variable=\t]({1*4.34*cos(\t r)+0*4.34*sin(\t r)},{0*4.34*cos(\t r)+1*4.34*sin(\t r)});
\draw [shift={(-1.16,3.17)}] plot[domain=4.46:5.15,variable=\t]({1*2.76*cos(\t r)+0*2.76*sin(\t r)},{0*2.76*cos(\t r)+1*2.76*sin(\t r)});
\draw (-1.34,1.54)-- (-0.86,0.42);
\draw [shift={(-0.97,3.7)}] plot[domain=4.44:5.02,variable=\t]({1*3.84*cos(\t r)+0*3.84*sin(\t r)},{0*3.84*cos(\t r)+1*3.84*sin(\t r)});
\draw [shift={(-1.15,2.02)}] plot[domain=4.37:5.23,variable=\t]({1*2.81*cos(\t r)+0*2.81*sin(\t r)},{0*2.81*cos(\t r)+1*2.81*sin(\t r)});
\draw (-0.97,1.75)-- (-0.36,0.52);
\draw (-0.7,1.9)-- (-0.8,2.62);
\draw (-2.12,-1.16)-- (-2.7,-1.44);
\draw (0.26,-0.94)-- (1.76,-1.05);
\draw [->] (3.15,0.18) -- (4.51,0.16);
\draw [->] (0.85,-5.72) -- (1.75,-5.76);
\draw [->] (7.05,-6.02) -- (8.57,-6.06);
\draw (10.8,-4.64) node[anchor=north west] {$x_1=0$};
\draw (9.49,-4.82) node[anchor=north west] {$x_2=0$};
\draw (10.1,-6.07) node[anchor=north west] {$x_3=0$};
\draw (0.38,1.32) node[anchor=north west] {$\tilde x_1=0$};
\draw (-2.86,1.49) node[anchor=north west] {$\tilde x_2=0$};
\draw (-0.78,-1.43) node[anchor=north west] {$\tilde x_3=0$};
\draw (-0.66,-0.62) node[anchor=north west] {$E_1$};
\draw (-1.68,1.22) node[anchor=north west] {$E_m$};
\draw (-0.66,1.41) node[anchor=north west] {$E_n$};
\draw (4.71,-4.82) node[anchor=north west] {$\tilde x_1=0$};
\draw (2.94,-4.99) node[anchor=north west] {$\tilde x_2=0$};
\draw (3.67,-6.78) node[anchor=north west] {$\tilde x_3=0$};
\draw (3.92,-5.78) node[anchor=north west] {$E_1$};
\draw [color=wwwwww] (12.1,1.44)-- (10.52,0.48);
\draw [color=wwwwww] (10.52,0.48)-- (10.56,-1.37);
\draw [color=wwwwww] (10.56,-1.37)-- (12.18,-2.26);
\draw [color=wwwwww] (12.18,-2.26)-- (13.76,-1.3);
\draw [color=wwwwww] (13.76,-1.3)-- (13.72,0.55);
\draw [color=wwwwww] (13.72,0.55)-- (12.1,1.44);
\draw [shift={(14.3,-0.93)}] plot[domain=2.51:3.09,variable=\t]({1*2.64*cos(\t r)+0*2.64*sin(\t r)},{0*2.64*cos(\t r)+1*2.64*sin(\t r)});
\draw [shift={(11.4,-0.66)}] plot[domain=-0.08:1.03,variable=\t]({1*1.48*cos(\t r)+0*1.48*sin(\t r)},{0*1.48*cos(\t r)+1*1.48*sin(\t r)});
\draw [shift={(12.25,0.58)}] plot[domain=4.31:5.15,variable=\t]({1*1.5*cos(\t r)+0*1.5*sin(\t r)},{0*1.5*cos(\t r)+1*1.5*sin(\t r)});
\draw (11.66,-0.8)-- (10.56,-1.37);
\draw (12.88,-0.78)-- (13.76,-1.3);
\draw (12.1,1.44)-- (12.16,0.62);
\draw [shift={(12.22,1.47)}] plot[domain=4.45:5.06,variable=\t]({1*1.73*cos(\t r)+0*1.73*sin(\t r)},{0*1.73*cos(\t r)+1*1.73*sin(\t r)});
\draw (11.95,0.28)-- (12.56,0.28);
\draw [shift={(12.22,0.8)}] plot[domain=4.33:5.19,variable=\t]({1*1.4*cos(\t r)+0*1.4*sin(\t r)},{0*1.4*cos(\t r)+1*1.4*sin(\t r)});
\draw (12.74,1.65) node[anchor=north west] {$\tilde x_1=0$};
\draw (10.39,1.57) node[anchor=north west] {$\tilde x_2=0$};
\draw (10.7,-1.72) node[anchor=north west] {$\tilde x_3=0$};
\draw (11.27,-0.27) node[anchor=north west] {$E_1$};
\draw (11.45,0.92) node[anchor=north west] {$E_m$};
\draw [color=wwwwww] (-0.8,2.62)-- (-2.92,1.14);
\draw [color=wwwwww] (-2.92,1.14)-- (-2.7,-1.44);
\draw [color=wwwwww] (-2.7,-1.44)-- (-0.36,-2.53);
\draw [color=wwwwww] (-0.36,-2.53)-- (1.76,-1.05);
\draw [color=wwwwww] (1.76,-1.05)-- (1.54,1.52);
\draw [color=wwwwww] (1.54,1.52)-- (-0.8,2.62);
\draw [shift={(2.61,-1.06)}] plot[domain=2.56:3.16,variable=\t]({1*4.73*cos(\t r)+0*4.73*sin(\t r)},{0*4.73*cos(\t r)+1*4.73*sin(\t r)});
\draw [shift={(-1.1,0.81)}] plot[domain=4.24:5.37,variable=\t]({1*2.22*cos(\t r)+0*2.22*sin(\t r)},{0*2.22*cos(\t r)+1*2.22*sin(\t r)});
\draw (-0.7,1.9)-- (-1.34,1.54);
\draw (-1.34,1.54)-- (-0.86,0.42);
\draw (-0.7,1.9)-- (-0.8,2.62);
\draw (-2.12,-1.16)-- (-2.7,-1.44);
\draw (0.26,-0.94)-- (1.76,-1.05);
\draw (5.79,0.15) node[anchor=north west] {$\ldots$};
\draw [->] (7.5,0.04) -- (8.92,0.04);
\draw [color=wwwwww] (4.21,-4.07)-- (2.53,-5.11);
\draw [color=wwwwww] (2.53,-5.11)-- (2.59,-7.08);
\draw [color=wwwwww] (2.59,-7.08)-- (4.33,-8.02);
\draw [color=wwwwww] (4.33,-8.02)-- (6.01,-6.98);
\draw [color=wwwwww] (6.01,-6.98)-- (5.95,-5);
\draw [color=wwwwww] (5.95,-5)-- (4.21,-4.07);
\draw [color=wwwwww] (4.19,-5.65)-- (3.65,-6.55);
\draw [color=wwwwww] (3.65,-6.55)-- (4.7,-6.56);
\draw [color=wwwwww] (4.7,-6.56)-- (4.19,-5.65);
\draw (3.65,-6.55)-- (2.59,-7.08);
\draw (4.7,-6.56)-- (6.01,-6.98);
\draw (4.21,-4.07)-- (4.19,-5.65);
\draw (-0.56,-5.53) node[anchor=north west] {$$ \ldots $$};
\draw [color=wwwwww] (10.62,-4.2)-- (9.32,-5.08);
\draw [color=wwwwww] (9.32,-5.08)-- (9.43,-6.65);
\draw [color=wwwwww] (9.43,-6.65)-- (10.84,-7.33);
\draw [color=wwwwww] (10.84,-7.33)-- (12.14,-6.45);
\draw [color=wwwwww] (12.14,-6.45)-- (12.03,-4.89);
\draw [color=wwwwww] (12.03,-4.89)-- (10.62,-4.2);
\draw (10.66,-5.98)-- (9.43,-6.65);
\draw (10.66,-5.98)-- (10.62,-4.2);
\draw (10.66,-5.98)-- (12.14,-6.45);
\draw (-2.88,3.1) node[anchor=north west] {$X_n$};
\draw (-1.23,0.53) node[anchor=north west] {$E_{m-1}$};
\draw (11.12,0.53) node[anchor=north west] {$E_{m-1}$};
\draw (9.84,2.16) node[anchor=north west] {$X_m$};
\draw (2.36,-4.05) node[anchor=north west] {$X_1$};
\draw (9.29,-4.09) node[anchor=north west] {$X$};
\end{tikzpicture}
}
\vspace{-1.5cm}
\caption{The tower construction}
\label{pic:tower}
\end{figure}

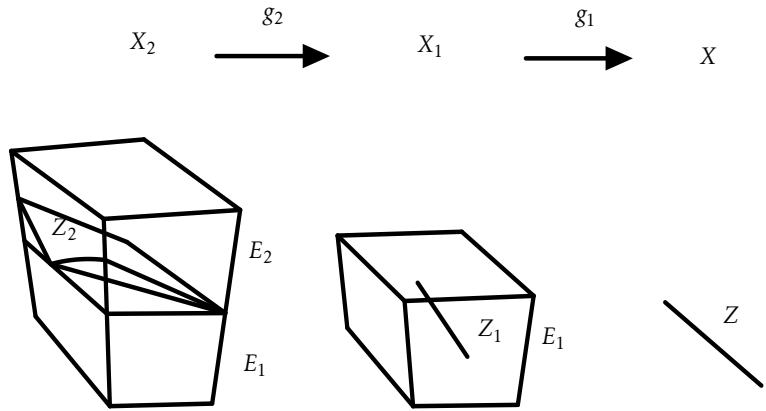
\begin{figure}[!ht]
\centering
\resizebox{13cm}{9cm}{
\begin{tikzpicture}[line cap=round,line join=round,>=triangle 45,x=1cm,y=1cm]
\clip(-7.378773445127541,-5.92286509426191) rectangle (8.50224090660893,4.6644778068957375);
\draw [line width=2pt] (5,-2)-- (6.58,-3.3);
\draw [line width=2pt] (0.86,-3.62)-- (2.58,-3.6);
\draw [line width=2pt] (2.58,-3.6)-- (2.84,-1.9);
\draw [line width=2pt] (0.86,-3.62)-- (0.72,-1.98);
\draw [line width=2pt] (0.72,-1.98)-- (2.84,-1.9);
\draw [line width=2pt] (0.72,-1.98)-- (-0.38,-0.96);
\draw [line width=2pt] (-0.38,-0.96)-- (-0.22,-2.4);
\draw [line width=2pt] (-0.22,-2.4)-- (0.86,-3.62);
\draw [line width=2pt] (-0.38,-0.96)-- (1.66,-0.9);
\draw [line width=2pt] (1.66,-0.9)-- (2.84,-1.9);
\draw [line width=2pt] (-4.12,-3.62)-- (-2.44,-3.58);
\draw [line width=2pt] (-2.44,-3.58)-- (-1.98,-0.56);
\draw [line width=2pt] (-1.98,-0.56)-- (-4.22,-0.7);
\draw [line width=2pt] (-4.22,-0.7)-- (-4.12,-3.62);
\draw [line width=2pt] (-4.12,-3.62)-- (-5.34,-2.22);
\draw [line width=2pt] (-5.34,-2.22)-- (-5.74,0.36);
\draw [line width=2pt] (-5.74,0.36)-- (-4.22,-0.7);
\draw [line width=2pt] (-5.74,0.36)-- (-3.56,0.54);
\draw [line width=2pt] (-3.56,0.54)-- (-1.98,-0.56);
\draw [line width=2pt] (-5.52287659174931,-1.0404459832169475)-- (-4.169327585398997,-2.179634506349281);
\draw [line width=2pt] (-4.169327585398997,-2.179634506349281)-- (-2.224984997856837,-2.1683797685383626);
\draw [line width=2pt] (-2.224984997856837,-2.1683797685383626)-- (-4.198061009324774,-1.3406185277166018);
\draw [shift={(-4.471809900710919,-3.775548702283563)},line width=2pt]  plot[domain=1.4588406706424029:1.8259331294435601,variable=\t]({1*2.450270068900944*cos(\t r)+0*2.450270068900944*sin(\t r)},{0*2.450270068900944*cos(\t r)+1*2.450270068900944*sin(\t r)});
\draw [line width=2pt] (-5.09020363378183,-1.404596867798)-- (-5.6251475852356085,-0.3807980752303268);
\draw [line width=2pt] (-5.6251475852356085,-0.3807980752303268)-- (-3.84,-1.06);
\draw [line width=2pt] (-3.84,-1.06)-- (-2.224984997856837,-2.1683797685383626);
\draw [line width=2pt] (-5.09020363378183,-1.404596867798)-- (-2.224984997856837,-2.1683797685383626);
\draw (5.819877185006908,-1.9259655090933354) node[anchor=north west] {$Z$};
\draw (5.429069225568203,2.106462072387849) node[anchor=north west] {$X$};
\draw (2.835525494747706,-2.263481474063126) node[anchor=north west] {$E_1$};
\draw [line width=2pt] (0.9347776920230949,-1.6950335330613733)-- (1.7519216072131147,-2.8496934132211837);
\draw (1.7874496035257244,-2.139133486968993) node[anchor=north west] {$Z_1$};
\draw (0.7926657067726567,2.3018660521072016) node[anchor=north west] {$X_1$};
\draw (-2.0673379963924123,-2.672053431658136) node[anchor=north west] {$E_1$};
\draw (-1.9785180056108886,-0.9134176141839631) node[anchor=north west] {$E_2$};
\draw (-5.211565670058358,-0.5581376510578675) node[anchor=north west] {$Z_2$};
\draw (-3.932557802804414,2.3373940484198115) node[anchor=north west] {$X_2$};
\draw [->,line width=2pt] (-2.369325965049594,1.8400021000432776) -- (-0.5041061586375921,1.8400021000432776);
\draw [->,line width=2pt] (2.7111775076535727,1.804474103730668) -- (4.469813325127745,1.804474103730668);
\draw (3.386209437593154,2.6926740115459067) node[anchor=north west] {$g_1$};
\draw (-1.7475860295789265,2.7282020078585165) node[anchor=north west] {$g_2$};
\end{tikzpicture}
}
\vspace{-1.5cm}
\caption{The counter-example}
\label{pic:tower_counter_example}
\end{figure}

\newpage




\begin{thebibliography}{BCHM10++}


\bibitem[And85]{Ando}
T.~Ando, \emph{On extremal rays of the higher-dimensional varieties}, Invent.
  Math. \textbf{81} (1985), no.~2, 347--357.

\bibitem[And95]{And95}
M.~Andreatta, \emph{Some remarks on the study of good contractions},
  Manuscripta Math. \textbf{87} (1995), no.~3, 359--367.

\bibitem[And18]{And18}
M.~Andreatta, \emph{Lifting weighted blow-ups}, Rev. Mat. Iberoam.
  \textbf{34} (2018), no.~4, 1809--1820.

\bibitem[AT14]{AndTas14}
M.~Andreatta and L.~Tasin, \emph{Fano-{M}ori contractions of high length on
  projective varieties with terminal singularities}, Bull. Lond. Math. Soc.
  \textbf{46} (2014), no.~1, 185--196.

\bibitem[AT16]{AndTas16}
\bysame, \emph{Local {F}ano-{M}ori contractions of high nef-value}, Math. Res.
  Lett. \textbf{23} (2016), no.~5, 1247--1262.

\bibitem[BS95]{BS}
M.~C. Beltrametti and A.~J. Sommese, \emph{The adjunction theory of complex
  projective varieties}, De Gruyter Expositions in Mathematics, vol.~16, Walter
  de Gruyter \& Co., Berlin, 1995.

\bibitem[BCHM10]{BCHM}
C.~Birkar, P.~Cascini, C.~D. Hacon, and J.~McKernan, \emph{Existence of minimal
  models for varieties of log general type}, J. Amer. Math. Soc. \textbf{23}
  (2010), no.~2, 405--468.

\bibitem[BFT19]{BFT19}
J.~Blanc, A.~Fanelli, and R.~Terpereau, \emph{Connected algebraic groups acting
  on 3-dimensional mori fibrations}, preprint \arXiv{1912.11364} (2019).

\bibitem[BSU13]{BSU}
M.~Brion, P.~Samuel, and V.~Uma, \emph{Lectures on the structure of algebraic
  groups and geometric applications}, CMI Lecture Series in Mathematics,
  vol.~1, Hindustan Book Agency, New Delhi; Chennai Mathematical Institute
  (CMI), Chennai, 2013.

\bibitem[Cor95]{Cor95}
A.~Corti, \emph{Factoring birational maps of threefolds after {S}arkisov},
  J. Algebraic Geom. \textbf{4} (1995), no.~2, 223--254.
  
\bibitem[Deb02]{Debarre}
O.~Debarre, \emph{Higher-dimensional algebraic geometry}, Universitext, New York, NY: Springer, 2001.

\bibitem[Flo20]{Flo18}
E.~Floris, \emph{A note on the {$G$}-{S}arkisov program}, Enseign. Math.
  \textbf{66} (2020), no.~1-2, 83--92.

\bibitem[Fuj11]{Fujino}
O.~Fujino, \emph{Semi-stable minimal model program for varieties with trivial
  canonical divisor}, Proc. Japan Acad. Ser. A Math. Sci. \textbf{87} (2011),
  no.~3, 25--30.

\bibitem[Har77]{Hartshorne}
R.~Hartshorne, \emph{Algebraic geometry}, Graduate Texts in Mathematics, vol. 52, Springer-Verlag, New York-Heidelberg, 1977.

\bibitem[HM10]{HMcK10}
C.~D. Hacon and J.~McKernan, \emph{Existence of minimal models for
  varieties of log general type. {II}}, J. Amer. Math. Soc. \textbf{23} (2010),
  no.~2, 469--490.

\bibitem[HM13]{HMcK}
\bysame, \emph{The {S}arkisov program}, J. Algebraic Geom.
  \textbf{22} (2013), no.~2, 389--405.

\bibitem[Hui01]{Huisman}
J.~Huisman, \emph{The equivariant fundamental group, uniformization of real
  algebraic curves, and global complex analytic coordinates on
  {T}eichm\"{u}ller spaces}, Ann. Fac. Sci. Toulouse Math. (6) \textbf{10}
  (2001), no.~4, 659--682.

\bibitem[Jou83]{Jouanolou}
J.-P. Jouanolou, \emph{Th\'{e}or\`emes de {B}ertini et applications}, Progress
  in Mathematics, vol.~42, Birkh\"{a}user Boston, Inc., Boston, MA, 1983.

\bibitem[Kaw01]{Kawakita}
M.~Kawakita, \emph{Divisorial contractions in dimension three which contract
  divisors to smooth points}, Invent. Math. \textbf{145} (2001), no.~1,
  105--119.

\bibitem[Kol07]{Kollar1}
J.~Koll\'{a}r, \emph{Lectures on resolution of singularities}, Annals of
  Mathematics Studies, vol. 166, Princeton University Press, Princeton, NJ,
  2007.

\bibitem[KM98]{KollarMori}
J.~Koll\'{a}r and S.~Mori, \emph{Birational geometry of algebraic varieties}, with the collaboration of C. H. Clemens and A. Corti,
  Cambridge Tracts in Mathematics, vol. 134, Cambridge University Press,
  Cambridge, 1998.

\bibitem[Loo08]{Looijenga}
E.~Looijenga, \emph{Artin groups and the fundamental groups of some moduli
  spaces}, J. Topol. \textbf{1} (2008), no.~1, 187--216.

\bibitem[RS06]{RS}
G.~V. Ravindra and V.~Srinivas, \emph{The {G}rothendieck-{L}efschetz theorem
  for normal projective varieties}, J. Algebraic Geom. \textbf{15} (2006),
  no.~3, 563--590.

\bibitem[Sei50]{Seidenberg}
A.~Seidenberg, \emph{The hyperplane sections of normal varieties}, Trans. Amer.
  Math. Soc. \textbf{69} (1950), 357--386.

\bibitem[Sha99]{AGV}
I.~R. Shafarevich (ed.), \emph{Algebraic geometry {V}.
Fano varieties.} A translation of \emph{Algebraic geometry 5} (Russian), Ross. Akad. Nauk, Vseross. Inst. Nauchn. i Tekhn. Inform., Moscow. Translation edited by A. N. Parshin and I. R. Shafarevich. Encyclopaedia of Mathematical Sciences, vol. 47, Springer-Verlag, Berlin, 1999.

\bibitem[Wi\'{s}91]{Wisn}
J.~A.~Wi\'{s}niewski, \emph{On contractions of extremal rays of {F}ano
  manifolds}, J. Reine Angew. Math. \textbf{417} (1991), 141--157.

\end{thebibliography}

\end{document}